                \pgfextractx{\pgf@xa}{\pgfpointanchor{\tikztostart}{east}}
                \pgfextractx{\pgf@xb}{\pgfpointanchor{\tikztotarget}{west}}
                \pgfextracty{\pgf@ya}{\pgfpointanchor{\tikztostart}{center}}
                \pgfextracty{\pgf@yb}{\pgfpointanchor{\tikztotarget}{center}}
                \edef\tikzstartx{\the\pgf@xa}
                \edef\tikzendx{\the\pgf@xb}
                \edef\midy{\the\dimexpr0.5\dimexpr\pgf@ya\relax +0.5\dimexpr\pgf@yb\relax}
\crefname{diagram}{diagram}{diagrams}
\crefname{sseq}{}{}
\crefname{equation}{}{}
\crefname{equation-b}{Equation}{Equations}
\newcommand{\stkout}[1]{\ifmmode\text{\sout{\ensuremath{#1}}}\else\sout{#1}\fi}
\definecolor{darkspringgreen}{rgb}{0.09, 0.45, 0.27}
\pretocmd{\maketitle}{%
    \edef\@title{\unexpanded{\protect\Large}\unexpanded\expandafter{\@title}}%
    \edef\authors{\unexpanded{\protect\normalsize}\unexpanded\expandafter{\authors}}%
}{}{\error}
\def\addlabeltolink#1{{\addlabeltolink@#1}}
\def\addlabeltolink@#1#2#3#4{#1{#2}{#3}{\thecontentslabel. #4}}
\titleformat{\section}[block]{\centering\bfseries\Large}{\thetitle. }{0pt}{}
\titlespacing{\section}{0pt}{*4}{*2}
\titleformat{\subsection}[hang]{\bfseries}{\thetitle. }{0pt}{}
\titlespacing{\subsection}{0pt}{*2}{*1.5}
\def\@secnumpunct{. }
\xpatchcmd{\proof}{\topsep6\p@\@plus6\p@\relax}{\topsep0pt\relax}{}{\error}
\newcommand\Einfty{\mathbb{E}_{\infty}}
\newcommand\Ainfty{\A_{\infty}}
\newcommand{\M}[2]{\mr{M}_{#1}\hspace{-2.5pt}\left(#2 \right)}
\newcommand{\mr}[1]{\mathrm{#1}}
\newcommand{\GG}{\mathbb{G}}
\newcommand{\Ab}{\mathcal{A}\mr{b}}
\def\S{\mathbb{S}}
\newcommand{\CP}{\mathbb{CP}}
\newcommand{\sph}{\mathbb{S}}
\newcommand{\csph}{\mathcal{S}_{\sph}}
\newcommand{\disk}{\mathcal{D}_{\sph}}
\newcommand{\R}{\mr{R}}
\newcommand\E{\mr{E}}
\newcommand{\Func}{\mr{Func}}
\newcommand{\K}{\mr{K}}
\newcommand{\A}{\mathbb{A}}
\newcommand{\iso}{\cong}
\newcommand{\Cl}{\mr{C}}
\newcommand{\brk}[1]{\hspace{-.5pt}\langle #1\rangle }
\newcommand{\tsph}{\mathcal{S}}
\newcommand{\tdisk}{\mathcal{D}}
\newcommand{\Prin}{\mr{Prin}}
\newcommand{\csphp}{\mathcal{S}_{\sph_p}}
\DeclareMathOperator{\padic}{\hat{\mathbb{Z}}}
\newcommand*{\rom}[1]{\expandafter\@slowromancap\romannumeral #1@} \makeatother
\def\@url#1{{\tt\def~{\lower3.5pt\hbox{\char'176}}\def\_{\char'137}#1}}
\let\c@lemma\c@theorem
\newtheorem{thm}[equation]{Theorem}
\newtheorem{main}[equation]{Main Theorem}
\newtheorem{cor}[equation]{Corollary}
\newtheorem{lem}[equation]{Lemma}
\newtheorem{conj}[equation]{Conjecture}
\theoremstyle{definition}
\newtheorem{defn}[equation]{Definition}
\newtheorem{ex}[equation]{Example}
\newtheorem{rmk}[equation]{Remark}
\newtheorem{notn}[equation]{Notation}
\newtheorem*{thm*}{Theorem}
\newtheorem*{cor*}{Corollary}
\newtheorem*{lem*}{Lemma}
\newtheorem*{prop*}{Proposition}
\newtheorem*{not*}{Notation}
\newtheorem*{guess*}{Guess}
\newtheorem*{defn*}{Definition}
\newtheorem*{ex*}{Example}
\newtheorem*{exs*}{Examples}
\newtheorem*{rmk*}{Remark}
\newtheorem*{claim*}{Claim}
\newtheorem*{exer*}{Exercise}
\numberwithin{equation}{section}
\numberwithin{figure}{section}
\let\c@lem=\c@thm
\let\c@cor=\c@thm
\let\c@prop=\c@thm
\let\c@lem=\c@thm
\let\c@ex=\c@thm
\let\c@exs=\c@thm
\let\c@obs=\c@thm
\let\c@rmk=\c@thm
\let\c@perthm=\c@thm
\let\c@conjtel=\c@thm
\let\c@exmps=\c@thm
\let\c@rem=\c@thm
\let\c@question=\c@thm
\let\c@warn=\c@thm
\let\c@claim=\c@thm
\let\c@quest=\c@thm
\let\c@notation=\c@thm
\let\c@note=\c@thm
\let\c@conjtel=\c@thm
\let\c@gue=\c@thm
\let\c@goal=\c@thm
\DeclareMathOperator\AHSS{AHSS}
\DeclareMathOperator*{\colim}{colim}
\DeclareMathOperator{\Sp}{\mathcal{S}p}
\DeclareMathOperator{\Top}{\mathcal{T}op}
\DeclareMathOperator{\TTop}{\mathcal{M}}
\DeclareMathOperator{\GL}{GL_1}
\DeclareMathOperator{\SL}{SL_1}
\DeclareMathOperator{\BGL}{BGL_1}
\newcommand{\T}{\mathbb{T}}
\renewcommand\H{\mathrm{H}}
\def\makecommands#1#2#3{
    \bgroup
    \def\tempcmdname##1{#1}
    \def\tempcmdbody##1{#2}
    \def\\##1{\expandafter\xdef\csname\tempcmdname{##1}\endcsname{\unexpanded\expandafter{\tempcmdbody{##1}}}}
    #3
    \egroup
}
\def\upperalphabet{\\A\\B\\C\\D\\E\\F\\G\\H\\I\\J\\K\\L\\M\\N\\O\\P\\Q\\R\\S\\T\\U\\V\\W\\X\\Y\\Z}
\def\loweralphabet{\\a\\b\\c\\d\\e\\f\\g\\h\\i\\j\\k\\l\\m\\n\\o\\p\\q\\r\\s\\t\\u\\v\\w\\x\\y\\z}
\def\lowergreekalphabet{\\\alpha\\\beta\\\gamma\\\delta\\\epsilon\\\zeta\\\eta\\\theta\\\kappa\\\lambda\\\mu\\\nu
    \\\xi\\\pi\\\rho\\\sigma\\\tau\\\upsilon\\\psi\\\chi\\\phi\\\omega}
\newcommand{\sma}{\wedge}
\definecolor{limegreen}{rgb}{0.2, 0.8, 0.2}
\definecolor{darkmagenta}{rgb}{0.55, 0.0, 0.55}
\definecolor{lavenderrose}{rgb}{0.91, 0.33, 0.5}
\definecolor{goldenpoppy}{rgb}{0.99, 0.76, 0.0}
\definecolor{seagreen}{rgb}{0.11, 0.35, 0.02}
\definecolor{maroon}{RGB}{128,0,0}
\definecolor{darkviolet}{RGB}{148,0,211}
\def\tikzcdequalsignoffset{0.1em}
\def\findedgesourcetarget#1#2{
    \let\sourcecoordinate\pgfutil@empty
    \ifx\tikzcd@startanchor\pgfutil@empty 
        \def\tempa{\pgfpointanchor{#1}{center}}
    \else
        \edef\tempa{\noexpand\pgfpointanchor{#1}{\expandafter\@gobble\tikzcd@startanchor}} 
        \let\sourcecoordinate\tempa
    \fi
    \ifx\tikzcd@endanchor\pgfutil@empty 
        \def\tempb{\pgfpointshapeborder{#2}{\tempa}}
    \else
        \edef\tempb{\noexpand\pgfpointanchor{#2}{\expandafter\@gobble\tikzcd@endanchor}}
    \fi
    \let\targetcoordinate\tempb
    \ifx\sourcecoordinate\pgfutil@empty%
        \def\sourcecoordinate{\pgfpointshapeborder{#1}{\tempb}}%
    \fi
}
\tikzset{/tikz/commutative diagrams/equal/.style=equals,
    /tikz/commutative diagrams/equals/.style = {
    -,
    to path={\pgfextra{
        \findedgesourcetarget{\tikzcd@ar@start}{\tikzcd@ar@target} 
        \ifx\tikzcd@startanchor\pgfutil@empty
            \def\tikzcd@startanchor{.center}
        \fi
        \ifx\tikzcd@endanchor\pgfutil@empty
            \def\tikzcd@endanchor{.center}
        \fi
        \pgfmathanglebetweenpoints{\pgfpointanchor{\tikzcd@ar@start}{\expandafter\@gobble\tikzcd@startanchor}}{\pgfpointanchor{\tikzcd@ar@target}{\expandafter\@gobble\tikzcd@endanchor}}
        \pgftransformrotate{\pgfmathresult}
        \pgfpathmoveto{\pgfpointadd{\sourcecoordinate}{\pgfpoint{0}{\tikzcdequalsignoffset}}}
        \pgfpathlineto{\pgfpointadd{\targetcoordinate}{\pgfpoint{0}{\tikzcdequalsignoffset}}}
        \pgfpathmoveto{\pgfpointadd{\sourcecoordinate}{\pgfpoint{0}{-\tikzcdequalsignoffset}}}
        \pgfpathlineto{\pgfpointadd{\targetcoordinate}{\pgfpoint{0}{-\tikzcdequalsignoffset}}}
        \pgfusepath{draw}
}}}}
\title{Higher Associativity of Moore Spectra}
\author{Prasit~Bhattacharya}\address{University of Virginia}\email{pb9wh@virginia.edu}
\thanks{}
\begin{document}

\begin{abstract}
 The Moore spectrum $\mathrm{M}_p(i)$ is the cofiber of the $p^{i}$ map on the sphere spectrum. For a fixed $p$ and $n$, we find a lower bound on $i$ for which $\mathrm{M}_p(i)$ is guaranteed to be $n$-fold associative. This bound depends on the stable homotopy groups of spheres.\end{abstract}

\maketitle	


\section{Introduction} \label{Sec:intro}

Let $(\Sp, \sma, \sph)$ denote the category of $\sph$-modules of \cite{EKMM}, a modern point-set  category of spectra, which is closed symmetric monoidal. In this category, an associative monoid is precisely an $\Ainfty$-ring spectrum and a commutative monoid is precisely an $\Einfty$-ring spectrum. The sphere spectrum $\sph^0$, being the unit, is a perfectly good $\Einfty$-ring spectrum. The $i$-th mod $p$ Moore spectrum, denoted by $\M{p}{i}$, is the cofiber of the multiplication by $p^i$ map 
\[  \begin{tikzcd}
 \sph \rar[" p^i"]  & \sph \rar& \M{p}{i}.
 \end{tikzcd} \]
 
The category $(\Sp, \sma, \sph)$ can be compared to the symmetric monoidal category of Abelian groups $(\Ab, \otimes, \mathbb{Z})$ via the monoidal functor 
\[
\begin{tikzcd}
 \pi_0(-) : (\Sp, \sma, \sph) \rar & (\Ab, \otimes, \mathbb{Z})
 \end{tikzcd} \]
 which assigns a spectrum its zeroth stable homotopy group. Under this comparison $\M{p}{i}$ is analogous to $\mathbb{Z}/p^{i}$, the quotient of  the multiplication by $p^i$ map on the unit $\mathbb{Z}$.   While  $\mathbb{Z}/p^{i}$ inherits the commutative and the associative structure from $\mathbb{Z}$, $\M{p}{i}$ does not inherit the $\Einfty$-structure from $\sph$ (see  \cite[Remark 4.3]{MNN} for a proof).  In fact, Mark Mahowald conjectured that $\M{p}{i}$ does not  even admit an $\Ainfty$-structure (see \Cref{conj:mah}). The conjecture holds true in the case when $i =1$. 

\begin{ex} \label{ex:even}
 The mod $2$ Moore spectrum $\M{2}{1}$ does not admit a multiplication. The proof is as follows. If $\M{2}{1}$ does admit a unital multiplication  
\[ \begin{tikzcd}
 \mu: \M{2}{1} \sma \M{2}{1} \rar& \M{2}{1},
 \end{tikzcd}
 \]
then the map $\mu$ splits the cofiber sequence 
\[ 
\begin{tikzcd}
\M{2}{1} \rar["2"] & \M{2}{1} \rar & \arrow[bend right =30, dashed]{l}{\mu} \M{2}{1} \sma \M{2}{1} \rar & \Sigma \M{2}{1} \rar &\dots 
\end{tikzcd} 
\]
resulting in an equivalence 
\[ \M{2}{1} \sma \M{2}{1} \simeq \M{2}{1} \vee \Sigma \M{2}{1}.\]
However, the above equivalence is a contradiction to the fact that the cohomology of $\M{2}{1} \sma \M{2}{1}$  does not split as a module over the Steenrod algebra.   
\end{ex} 
\begin{ex} \label{ex:odd} If $p$ is odd, the $\M{p}{1}$ an $\mathbb{A}_{p-1}$-algebra structure (see~\Cref{ex:oddmoore}) but not an $\mathbb{A}_p$-algebra structure. The proof is obtained by combining the work of Toda~\cite{Toda}, Kochman~\cite{Ko} and Kraines~\cite{Kr} as explained in \cite[Example~3.3]{VKT1}.
\end{ex}

Following \Cref{conj:mah}, \Cref{ex:even} and \Cref{ex:odd}, it is natural to ask `for what values of $n$ does $\M{p}{i}$ admit an $\A_n$-structure when $i>1$?'

Before the results in this paper, hardly anything was known about the $\mathbb{A}_n$-structures of $\M{p}{i}$ for $i>1$. The only result dates back to 1982 when Oka \cite[Theorem 2]{Oka} proved that $\M{2}{i}$ admits an $\mathbb{A}_3$-structure (i.e. a homotopy associative multiplication) for $i \geq 2$. In this paper, we prove that: 
\begin{main}\label{main:est1}Fix a prime $p$ and an integer $n > 1$. Define the function $o_p(n)$ as
\[ o_p(n) = \# \lbrace k:\text{ $k \leq 2n-3$, $k$ odd, and  $p$-torsion of $\pi_k(\sph^0)$ is nonzero}\rbrace. \]
 When $p$ is odd, $\M{p}{i}$ admits an $\A_n$-algebra structure if $i > o_p(n)$. When $p=2$, $\M{2}{i}$ admits an $\A_n$-algebra structure if $i > o_2(n)+1$. 
\end{main}

 \begin{table}[h] 
 \[ \begin{tabular}{| l | c  c  c | l | c c c |} 
  \hline\hline
   $n$ & $o_2(n)$ & $o_3(n)$ & $o_5(n)$ & $n$ & $o_2(n)$ & $o_3(n)$ & $o_5(n)$\\
   \hline \hline
  2 & 1 & 0 & 0 & 9 & 6& 5 & 2\\
  3 & 2 & 1 & 0 & 10 & 7& 5& 2\\
  4 & 2 & 1 & 0 & 11 & 8 & 6& 2\\
  5 & 3 & 2 & 1 & 12 & 9 & 6& 2\\
  6 & 4 & 2 & 1 & 13 & 10& 7 & 3\\
  7 & 5 & 3 & 1 & 14 & 11 & 7 & 3\\
  8 & 5 & 4 & 1 & 15 & 12 & 8 & 3\\
  \hline
\end{tabular} \]
 \label{table}
\caption{Values of $o_p(n)$ for $p = 2,3$ and $5$.}
\end{table}
To establish the usefulness of \Cref{main:est1}, we provide Table~\ref{table2} listing few examples  of the highest value of $n$ for which $\M{p}{i}$ admits $\A_n$-structure. The conclusions are made using  \Cref{main:est1}, expect when $p=2$ and $i \in \{ 2,3 \}$. In the case of $\M{2}{2}$ and $\M{2}{3}$, \Cref{ex:moore2} provides a better answer.
\begin{table}[h]
\[ \begin{array}{| l || c | c | c |  c | c | c | c |} 
  \hline\hline
  i \to & 1 &2 &3 & 4 & 5 & 6 & 7   \\
   \hline \hline
  p=2 & \A_1 & \A_4 & \A_4 & \A_4 &\A_5 & \A_6 & \A_8 \\
   p=3 & \A_2 & \A_4 & \A_6 & \A_7 & \A_8 & \A_{10} & \A_{12} \\
   p=5 & \A_4 & \A_8 & \A_{12} & \A_{16} &\A_{20}& \A_{23} & \A_{24}\\
    \hline
\end{array} \] 
\caption{List of highest $n$ for which $\M{p}{i}$ is known to admit an $\A_n$-structure ( Restricted to $i \leq 7$ and $p = 2,3$ and $5$.)} \label{table2}
\end{table}

The key idea behind the proof of \Cref{main:est1} is to obtain $\M{p}{i}$ as a Thom spectrum (see \Cref{cor:Moorethom}). Using a result due to Stasheff \Cref{thm:barasso}, the problem of obtaining  $\A_n$-structure on $\M{p}{i}$ can be reduced to the study of an Atiyah-Hirzebruch spectral sequence \Cref{eqn:AHSS}.

We end the paper with \Cref{conj:J}, which predicts  that the obstruction to $\Ainfty$-structure on $\M{p}{i}$ is in the chromatic layer $1$ of its homotopy groups. 

\begin{rmk}
 For this paper, one can also choose any other modern point-set categories of spectra, such as the symmetric spectra in simplicial sets of \cite{HSS} or the orthogonal spectra of \cite{MMSS}, however, technical adjustments needed at various stages of the paper may vary depending on the chosen category of spectra. 
\end{rmk}
\subsection*{Organization of the paper} In \Cref{Sec:background} has been established to provide the necessary background and discuss finer details that goes into the proof of \Cref{main:est1}. In the process,  we discuss basics of operad theory, Stasheff's $\A_n$-operad,  obstruction theory for $\A_n$-structures (developed by Stasheff), a conjecture of Mahowald for $2$-cell complexes, $\A_n$-maps, homotopy $\A_n$-maps, an obstruction theory for homotopy $\A_n$-maps (\Cref{thm:barasso}) and a rigidification theorem of Boardman and Vogt (\Cref{thm:Anfix}) which converts a homotopy $\A_n$-map into an $\A_n$-map.  The best known $\A_n$-structures on $\M{2}{2}$ and $\M{2}{3}$ are obtained as an application of an obstruction theory (see \Cref{ex:moore2}). 
 
In \Cref{Sec:Moorethom}, we review the general construction of Thom spectra and show that Moore spectra can be realized as Thom spectra. 
 
In \Cref{Sec:miantheorem} we prove \Cref{main:est1}.

\subsection*{Acknowledgements}
It is a  privilege to have discussed this problem with late Mark Mahowald and would like to thank him for many insightful comments regarding this project. I have  also benefitted from discussions with Vigliek Angleveit, Tyler Lawson, Akhil
Mathew and Haynes Miller. Above all, I would like to thank my adviser Michael Mandell for introducing me to this project as well as helping me out at various stages of this project. This project would have been impossible without his support, encouragement and insight. I would also like to thank Mathematical Science Research Institute for its hospitality during which part of this research was done.

%
%
\section{ Background} 
 \label{Sec:background}
  \subsection{Operads and operad algebras}

Operads are mathematical objects which encodes algebraic structures in a symmetric monoidal category. 
\begin{defn} \label{defn:operad}
 A topological  \emph{operad} $\mathcal{O}$ is a sequence of spaces $\mathcal{O}(n)$ for $n \geq 0$  together with distinguished element $1 \in \mathcal{O}(1)$ and the data of continuous functions 
\[ \gamma(n;j_1, \dots, j_n): \mathcal{O}(n) \times (\mathcal{O}(j_1)\times \dots \times \mathcal{O}(j_n)) \to \mathcal{O}(j) \]
where $j = j_1 + \dots + j_n$, which satisfy 
\begin{enumerate}[(i)]
\item $\gamma(\gamma(n;j_1, \dots, j_n); i_1, \dots, i_j) = \gamma(n; \gamma(j_1;i_1, \dots, i_{j_1}), \dots, \gamma(j_n; i_{j-j_n+1}, \dots, i_j))$, 
\item $\gamma(1;n)(1,x) = x \text{ and } \gamma(n;1, \dots, 1)(x, 1, \dots, 1) = x.$
\end{enumerate}
A \emph{nonunital operad} is an operad such that $\mathcal{O}(0)= \emptyset$. A \emph{unital operad} is an operad with $\mathcal{O}(0) = *$. 
\end{defn}
\begin{rmk} Operads often come with symmetries, and those that do not are called non-$\Sigma$ operads. Since we mostly  work with non-$\Sigma$ operads, we use the  term `operad' to refer to non-$\Sigma$ operads.
\end{rmk}
\begin{rmk} Let $\circ_{i}$ denote the map
\[ \gamma(n, 1,\dots,1,k,1,\dots,1): \mathcal{O}(n) \times (\mathcal{O}(1)^{\times i-1} \times \mathcal{O}(k) \times \mathcal{O}(1)^{\times n-i}) \longrightarrow \mathcal{O}(n+k -1).\]
 The operations $\{ \circ_i: i \geq 1 \}$ determine and are determined by the operad structure on $\mathcal{O}$ (see \cite[1.7.1]{MSS}).
\end{rmk}
\begin{ex}[Endomorphism operad] \label{ex:endoperad} For every object $X \in \Top$, the \emph{endomorphism   operad} $\mathcal{E}(X)$ is the operad whose  $n$-th space is \[ \mathcal{E}(X)(n) = \Func(X^{\times n}, X)\]
 and the map 
\[ \circ_i: \mathcal{E}(X)(n) \times \mathcal{E}(X)(m) \longrightarrow \mathcal{E}(X)(n+m-1)\]
 sends $(f,g)$ to the composite
\[ 
\begin{tikzcd}
X^{\times (n+m-1)} \arrow{rrr}{\mathbbm{1}_X^{\times (i-1)} \times g \times \mathbbm{1}_X^{\times (n-i)}}&&& X^{\times n} \rar["f"] & X.
\end{tikzcd}
 \]
\end{ex}

A map of operads $f:\mathcal{O} \to \mathcal{P}$ is a sequence of maps $f_n: \mathcal{O}(n) \to \mathcal{P}(n)$ such that $f_1(1) = 1 $  and commutes with the structure maps, i.e.  the diagram 
\[ 
\begin{tikzcd}
\mathcal{O}(n) \times (\mathcal{O}(j_1) \times \dots \times \mathcal{O}(j_n)) \rar[""] \dar["f_n \times (f_{j_1} \times \dots \times f_{j_n})"'] & \mathcal{O}(j_1 + \dots + j_n) \dar["f_{j_1 + \dots + j_n}"] \\
 \mathcal{P}(n) \times (\mathcal{P}(j_1) \times \dots \times \mathcal{P}(j_n)) \rar[""] & \mathcal{P}(j_1 + \dots + j_n)
\end{tikzcd}
\]
commutes for all $n, j_1, \dots , j_n \geq 0$.

\begin{defn} \label{defn:operadalgebra}
Let  $\mathcal{O}$ be a topological operad.  An $\mathcal{O}$-algebra structure on $X \in \Top$ is a map of operads
\[ \mu: \mathcal{O} \to \mathcal{E}(X). \]  
\end{defn}

 The above definition can easily be extended to define $\mathcal{O}$-algebra structure  for objects in $\Sp$. Note that $\Sp$ is closed symmetric monoidal, where the  suspension functor 
 \[ \sph[\ ]: \Top \to \Sp\]
 is monoidal and admits a right adjoint. Therefore, for any two spectrum $\mr{R}$ and $\mr{L}$ the collection of functions 
 \[ \Func(\mr{R},\mr{L})\]
 can be thought of as an object in $\Top$. As a result,  for any $\mr{R}\in \Sp$, we can define the endomorphism operad $\mathcal{E}(X)$, which is a topological operad with the $n$-th space 
 \[ \mathcal{E}(\mr{R})(n) = \Func(\mr{R}^{\sma n}, \mr{R} ).\]
 \begin{defn}
We say that $\mr{R} \in \Sp$  admits an $\mathcal{O}$-algebra structure if there exists a map of operads 
\[\mu: \mathcal{O} \to \mathcal{E}(\mr{R}).\]
\end{defn}
\subsection{Stasheff $\A_n$-operad}

In $1963$, Stasheff  \cite{Sta1} introduced a sequence of polytopes $\K^n$ which are now known as Stasheff polytopes. These polytopes are designed to describe a sequence of unital operads,  called the Stasheff $\A_n$-operads.  These operads can be used to describe a hierarchy of coherence for homotopy associative multiplications. 


The Stasheff polytope $\K^n$, as a topological space, is just homeomorphic to the disk $\mathbb{D}^{n-2}$, but encodes a rich cellular structure which parametrizes a homotopy coherent associative structure. The cells of $\K^n$ are indexed by the set of planar rooted trees with $n$ leaves. The polytopes $\K^1$ and $\K^2$ are just one-point spaces. The polytope $\K^3$ is the unit interval and its cellular structure is described in the picture below.

\begin{figure}[h] \label{fig:K3}
\centering
\begin{tikzpicture}
\node[left] at (0,0) {$\bullet$};
\node[right] at (3,0) {$\bullet$};
\draw (0,0) -- (3,0);
\tikzstyle{level 1}=[level distance=.08in]
		\tikzstyle{level 2}=[level distance=.08in, sibling distance=.08in]
		\tikzstyle{level 3}=[level distance=.08in, sibling distance=.06in]
		\tikzstyle{level 4}=[level distance=.08in, sibling distance=.08in]	
			\node[fill=none] at (-.2,.2) {} [grow'=up]
				child {
					child {
						child
						child [fill=none] {edge from parent[draw=none]}
						child
					}
					child [fill=none] {edge from parent[draw=none]}
					child 
				};
\tikzstyle{level 1}=[level distance=.08in]
		\tikzstyle{level 2}=[level distance=.08in, sibling distance=.08in]
		\tikzstyle{level 3}=[level distance=.08in, sibling distance=.06in]
		\tikzstyle{level 4}=[level distance=.08in, sibling distance=.08in]	
			\node[fill=none] at (3.2,.2) {} [grow'=up]
				child {
					child 
					child [fill=none] {edge from parent[draw=none]}
					child {
						child
						child [fill=none] {edge from parent[draw=none]}
						child
					}
				};
\tikzstyle{level 1}=[level distance=.16in]
		\tikzstyle{level 2}=[level distance=.16in, sibling distance=.16in]
		\tikzstyle{level 3}=[level distance=.16in, sibling distance=.12in]
		\tikzstyle{level 4}=[level distance=.16in, sibling distance=.16in]	
			\node[fill=none] at (1.5,.2) {} [grow'=up]
				child {
					child 
					child 
					child 
				};

\end{tikzpicture}
\caption{Cellular structure of $\K^3$ expressed in terms of trees}
\end{figure}

A product structure on $X \in \Top$ is a map $\mu: X \times X \to X$. One can also think of the product structure as a map 
\[ \mu_2: \K^2 \cong * \to \Func(X \times X, X).\]
If this multiplication is homotopy associative then the homotopy can be thought of as a map 
\[ \mu_3:  \K^3 \cong [0,1]\to \Func(X \times X \times X, X) \] 
such that $\mu_3(0) = \mu_2 \circ (\mu_2 \times \mathbbm{1}_X)$ and $\mu_3(1) = \mu_2 \circ ( \mathbbm{1}_X \times \mu_2)$.

The polytope $\K^4$ is the  pentagon. Given a multiplication $\mu_2$, there are five different four-fold multiplications, producing five different maps from $X^{ \times 4}$ to $X$. These multiplications can be encoded by five different binary trees with four leaves. These trees label the five vertices of $\K^4$ (see Figure~\ref{fig:K4}). Moreover, if the multiplication is homotopy associative, i.e. $\mu_3$ exists, then one can construct homotopies between any two four-fold multiplications. These homotopies can be glued together to give a map 
\[ \partial \mu_4: S^1 \cong \partial K^4 \to \Func(X^{\times 4}, X).\]
The edges of $\K^4$ are denoted by the planar rooted trees with four leaves and exactly one internal vertex. The arrangement is such that the tree that represents an edge can be obtained by collapsing one of the branches of those trees that represent one of the adjacent vertices. 
If the map $\partial \mu_4$ is homotopic to a constant map, then we can use this homotopy to obtain a map 
\[ \mu_4: \K^4 \to \Func(X^{\times 4}, X).\]
Thus $\K^4$ parameterizes homotopy coherence of the associativity among the four-fold multiplications. 

\begin{figure}[h] \label{fig:K4}
\centering
\begin{tikzpicture}
\coordinate (x1) at (0, 2);
\coordinate (x2) at (-4, 4);
\coordinate (x3) at (-1.5, 5);
\coordinate (x4) at (1.5, 5);
\coordinate (x5) at (4, 4);
\draw[fill, color= gray, fill opacity=.4] (x1) -- (x2) -- (x3) -- (x4) -- (x5) -- cycle;
\draw (x1) -- (x2) -- (x3) -- (x4) -- (x5) -- cycle;
 \tikzstyle{level 1}=[level distance=.08in]
		\tikzstyle{level 2}=[level distance=.08in, sibling distance=.08in]
		\tikzstyle{level 3}=[level distance=.08in, sibling distance=.06in]
		\tikzstyle{level 4}=[level distance=.08in, sibling distance=.08in]	
			\node[fill=none] at (0,2.2) {} [grow'=up]
				child {
					child {
						child
						child [fill=none] {edge from parent[draw=none]}
						child
					}
					child [fill=none] {edge from parent[draw=none]}
					child {
						child
						child [fill=none] {edge from parent[draw=none]}
						child
					}
				};

		\tikzstyle{level 1}=[level distance=.06in]
		\tikzstyle{level 2}=[level distance=.06in, sibling distance=.06in]
		\tikzstyle{level 3}=[level distance=.06in, sibling distance=.06in]
		\tikzstyle{level 4}=[level distance=.06in, sibling distance=.06in]	
		\node[fill=none] at (-4,4.1) {} [grow'=up]
				child {
					child {
						child {
							child
							child [fill=none] {edge from parent[draw=none]}
							child
						}
						child [fill=none] {edge from parent[draw=none]}
						child
					}
					child [fill=none] {edge from parent[draw=none]}
					child 
				};
		\tikzstyle{level 1}=[level distance=.04in]
		\tikzstyle{level 2}=[level distance=.04in, sibling distance=.04in]
		\tikzstyle{level 3}=[level distance=.04in, sibling distance=.04in]
		\tikzstyle{level 4}=[level distance=.04in, sibling distance=.04in]	
		\node[fill=none] at (-1.5,5.1) {} [grow'=up]
				child {
					child {
						child 
						child [fill=none] {edge from parent[draw=none]}
						child{
							child
							child [fill=none] {edge from parent[draw=none]}
							child
						}
					}
					child [fill=none] {edge from parent[draw=none]}
					child 
				};	
		\tikzstyle{level 1}=[level distance=.04in]
		\tikzstyle{level 2}=[level distance=.04in, sibling distance=.04in]
		\tikzstyle{level 3}=[level distance=.04in, sibling distance=.04in]
		\tikzstyle{level 4}=[level distance=.04in, sibling distance=.04in]	
		\node[fill=none] at (1.5,5.1) {} [grow'=up]
				child {
					child 
					child [fill=none] {edge from parent[draw=none]}
					child {
						child {
							child
							child [fill=none] {edge from parent[draw=none]}
							child
						} 
						child [fill=none] {edge from parent[draw=none]}
						child
					}
				};
		\tikzstyle{level 1}=[level distance=.06in]
		\tikzstyle{level 2}=[level distance=.06in, sibling distance=.06in]
		\tikzstyle{level 3}=[level distance=.06in, sibling distance=.06in]
		\tikzstyle{level 4}=[level distance=.06in, sibling distance=.06in]	
		\node[fill=none] at (4,4.1) {} [grow'=up]
				child {
					child 
					child [fill=none] {edge from parent[draw=none]}
					child {
						child 
						child [fill=none] {edge from parent[draw=none]}
						child{
							child
							child [fill=none] {edge from parent[draw=none]}
							child
						}
					}
				};	
	\tikzstyle{level 1}=[level distance=.08in, color = blue]
		\tikzstyle{level 2}=[level distance=.08in, sibling distance=.08in, color = blue]
		\tikzstyle{level 3}=[level distance=.08in, sibling distance=.08in, color = blue]
		\tikzstyle{level 4}=[level distance=.08in, sibling distance=.08in, color = blue]	
			\node[fill=none] at (-2,3) {} [grow'=up]
				child {
					child {
						child
						child [fill=none] {edge from parent[draw=none]}
						child
					}
					child 
					child 
				};
		\tikzstyle{level 1}=[level distance=.06in, color = blue]
		\tikzstyle{level 2}=[level distance=.06in, sibling distance=.06in, color = blue]
		\tikzstyle{level 3}=[level distance=.06in, sibling distance=.06in, color = blue]
		\tikzstyle{level 4}=[level distance=.06in, sibling distance=.06in, color = blue]	
			\node[fill=none] at (-2.5,4.6) {} [grow'=up]
				child {
					child {
						child
						child 
						child
					}
					child [fill=none] {edge from parent[draw=none]}
					child 
				};
		\tikzstyle{level 1}=[level distance=.05in, color = blue]
		\tikzstyle{level 2}=[level distance=.06in, sibling distance=.06in, color = blue]
		\tikzstyle{level 3}=[level distance=.04in, sibling distance=.04in, color = blue]
		\tikzstyle{level 4}=[level distance=.06in, sibling distance=.06in, color = blue]	
			\node[fill=none] at (0,5.05) {} [grow'=up]
				child {
					child 
					child {
						child
						child [fill=none] {edge from parent[draw=none]}
						child
					}
					child 
				};
		\tikzstyle{level 1}=[level distance=.06in]
		\tikzstyle{level 2}=[level distance=.06in, sibling distance=.06in, color = blue]
		\tikzstyle{level 3}=[level distance=.06in, sibling distance=.06in, color = blue]
		\tikzstyle{level 4}=[level distance=.06in, sibling distance=.06in, color = blue]	
			\node[fill=none] at (2.5,4.6) {} [grow'=up]
				child {
					child 
					child [fill=none] {edge from parent[draw=none]}
					child {
						child
						child 
						child
					}
				};
		\tikzstyle{level 1}=[level distance=.08in, color = blue]
		\tikzstyle{level 2}=[level distance=.08in, sibling distance=.08in, color = blue]
		\tikzstyle{level 3}=[level distance=.08in, sibling distance=.08in, color = blue]
		\tikzstyle{level 4}=[level distance=.08in, sibling distance=.08in, color = blue]	
			\node[fill=none] at (2,3) {} [grow'=up]
				child {
					child 
					child 
					child {
						child
						child [fill=none] {edge from parent[draw=none]} 
						child
					}
				};
\tikzstyle{level 1}=[level distance=.08in, color = red]
		\tikzstyle{level 2}=[level distance=.08in, sibling distance=.08in, color = red]
		\tikzstyle{level 3}=[level distance=.08in, sibling distance=.08in, color = red]
		\tikzstyle{level 4}=[level distance=.08in, sibling distance=.08in, color = red]	
			\node[fill=none] at (0,4) {} [grow'=up]
				child {
					child 
					child 
					child 
					child
				};
\end{tikzpicture}
\caption{$\K^4$ with its cells indexed by trees}
\end{figure}

In general the cells of $\K^n$ are in one-to-one correspondence with the planar rooted trees with $n$ leaves. More specifically, the codimension $k$ cells are in bijection with the planar rooted trees with $n$ leaves and $k$ internal vertices.
\begin{notn} \label{notn:tree1}Let $\T_n$ be the collection of planar rooted trees with $n$ leaves and \[\T_* = \bigcup_{n}\T_n.\] For each $t \in \T_n$, let $\K\brk{t}$ denote the corresponding cell of $\K^n$.  
\end{notn}
\begin{defn} A \emph{corolla} is a planar rooted tree with no internal vertex.
\end{defn}

For every $n \in \mathbb{N}$,  $\T_n$ contains exactly one corolla. If $t \in \T_n$ is a corolla then $\K(t)$ is the Stasheff polytope $\K^{n}$. For any other tree $t \in \T_n$, we can obtain a set of corollas by breaking the tree off at each vertex. We call this set the \emph{corolla decomposition} of $t$ and denote it by $\Cl(t).$
  
\begin{ex} If $t$ is the tree 
\[ \begin{tikzpicture}
\tikzstyle{level 1}=[level distance=.08in]
		\tikzstyle{level 2}=[level distance=.08in, sibling distance=.08in]
		\tikzstyle{level 3}=[level distance=.08in, sibling distance=.08in]
		\tikzstyle{level 4}=[level distance=.08in, sibling distance=.08in]	
			\node[fill=none] at (0,0) {} [grow'=up]
				child {
					child {
						child
						child
						child			
					}
					child 
					child 
					child {
						child
						child { child 
							child
							child
						}
					}
				};
\end{tikzpicture}
\]
then $\Cl\brk{t}= \lbrace
\begin{tikzpicture}[baseline=1.5ex]
\tikzstyle{level 1}=[level distance=.08in]
		\tikzstyle{level 2}=[level distance=.08in, sibling distance=.08in]
		\tikzstyle{level 3}=[level distance=.08in, sibling distance=.08in]
		\tikzstyle{level 4}=[level distance=.08in, sibling distance=.08in]	
			\node[fill=none] at (0,0) {} [grow'=up]
				child {
					child 
					child 
					child 
					child 
				};
\end{tikzpicture}, 
\begin{tikzpicture}[baseline=1.5ex]
\tikzstyle{level 1}=[level distance=.08in]
		\tikzstyle{level 2}=[level distance=.08in, sibling distance=.08in]
		\tikzstyle{level 3}=[level distance=.08in, sibling distance=.08in]
		\tikzstyle{level 4}=[level distance=.08in, sibling distance=.08in]	
			\node[fill=none] at (0,0) {} [grow'=up]
				child {
					child 
					child 
					child 
				};
\end{tikzpicture},
\begin{tikzpicture}[baseline=1.5ex]
\tikzstyle{level 1}=[level distance=.08in]
		\tikzstyle{level 2}=[level distance=.08in, sibling distance=.08in]
		\tikzstyle{level 3}=[level distance=.08in, sibling distance=.08in]
		\tikzstyle{level 4}=[level distance=.08in, sibling distance=.08in]	
			\node[fill=none] at (0,0) {} [grow'=up]
				child {
					child 
					child 
				};
\end{tikzpicture},
\begin{tikzpicture}[baseline=1.5ex]
\tikzstyle{level 1}=[level distance=.08in]
		\tikzstyle{level 2}=[level distance=.08in, sibling distance=.08in]
		\tikzstyle{level 3}=[level distance=.08in, sibling distance=.08in]
		\tikzstyle{level 4}=[level distance=.08in, sibling distance=.08in]	
			\node[fill=none] at (0,0) {} [grow'=up]
				child {
					child 
					child 
					child
				};
\end{tikzpicture} \rbrace.$
\end{ex} 
 The cell $\K(t) \subset \K^n$ is homeomorphic to 
 \begin{equation} \label{corollaiso}
  \prod_{s \in \Cl(t)}\K(s) = \prod_{s \in \Cl(t) }\K^{l(s)}
  \end{equation}
where $l(s)$ denotes the number of leaves in the corolla $s$. This product is unique up to association.  There are various models for the Stasheff polytope $\K^n$. The first one is of course due to Stasheff \cite{Sta1}. Other prominent models include \cite{BV,CFZ, Lod, Tonk}. Define
\[ \K^k\langle n \rangle = \bigcup_{t \in \T_{k}\langle n \rangle }\K(t) \subset \K^{k} = \K^{k}\brk{\infty}. \]

\begin{figure}[h]
\centering
\begin{tikzpicture}[scale = .7]
\draw[fill = gray, opacity = .6, color = gray] (-1, 0) -- (1, 0) -- (1,2) -- (-1, 2) -- (-1,0);
\draw[color = gray] (-1, 0) -- (-2, 1) -- (0, 1.5) -- (2,1) -- (1, 0); 
\draw[color =gray] (-2,1) -- (-2,3) -- (-1.5, 4) -- (-1, 2);
\draw[color =gray] (2,1) -- (2,3) -- (1.5, 4) -- (1, 2);
\draw[color =gray] (-1.5, 4) -- (0,5) -- (1.5, 4);
\draw[fill = gray, opacity = .6, color = gray] (0,5) -- (1.5, 4) -- (2,3) -- (0,4) --(0,5);
\draw[fill = gray, opacity = .6, color = gray] (0,1.5) -- (-.5, 2.75) -- (-2,3) -- (-2,1) --(0,1.5);
\draw[color = gray] (-.5, 2.75) -- (0,4);
\end{tikzpicture}
\caption{The complex $\K^5 \langle 3 \rangle$}
\end{figure} 
\begin{rmk} \label{rem:boundary} When $k \leq n$, the set of trees with at most $n$ descendants from each vertex, include all the trees with $k$ leaves, i.e. $T_{k} = T_{k}\langle n\rangle.$  Therefore, \[ \K^{k}\langle n \rangle = \K^{k} \] 
for $k \leq n$.
\end{rmk}  
\begin{rmk} $T_{n+1}\langle n\rangle$ consists of all trees  $T_{n+1}$  except the corolla with $n+1$-leaves. Hence,  
\[ \K^{n+1}\langle n \rangle = \partial \K^{n+1}.\]
\end{rmk}

 \begin{defn} \label{defn:Anoperad}For $1 \leq n \leq \infty$,  the \emph{Stasheff $\A_n$-operad} is an operad formed from out of the sequence 
 \[ \A_{n}(k) =K^k \langle n \rangle, \]
 where $k \geq 0$.
 \end{defn}  
All but the degenerate structure maps of $\A_n$, at the cellular level, can be described in terms of concatenation of trees.  The cellular description of a  basic degenerate map
\begin{equation} \label{eqn:degeneracy}
s_i := \gamma(n; \underbrace{1, \dots, 1}_{i-1}, 0 , 1 \dots, 1): \K^k\brk{n} \to \K^{k-1}\brk{n} 
\end{equation}
correspond to deleting the $i$-th leaf. The point-set level description of these maps are well-documented, for example see \cite{Sta1}.

\subsection{Obstruction theory for $\A_n$-structures}

 Let $n$ be a finite positive integer thorught this subsection. An $\A_{n}$-algebra structure on $X \in \Top$ equivalent to a collection of  
\[ \mu_k\brk{n}: \K^k\brk{n} \times X^{\times n} \to X \]
which satisfies the usual compatibility criterions. Since $\K^k\brk{n} = \K^k$ for $k \leq n$, we will denote abbreviate $\mu_k\brk{n}$ to $\mu_k$ for the remainder of the paper. Also note that when  $k>n$ each cell of $\K^k\brk{n}$ is isomorphic to a finite product of $\K^i$ for $1 \leq i \leq n$. As a result the finite collection of maps
\[ \{ \mu_0, \dots, \mu_n \}\]
determines $\mu_k\brk{n}$ for $k>n$. This leads to an obstruction theory.

Suppose, $X \in \Top$ admits an $\A_{n-1}$-structure. If we want to extend this structure to an $\A_n$-structure, then we simply need to  define a map 
\[ \mu_n: \K^n \times X^{\times n } \to X\]
which is compatible with $\{ \mu_0, \dots, \mu_{n-1} \} $ determined by the $\A_{n-1}$-structure. The compatibility criteria emerges from that fact that the $\A_{n-1}$-structure already determines the map $\mu_n$ restricted to certain subspaces of $\K^n \times X^{\times n}$. 

The fact that $\K^{n}\brk{n-1} = \partial \K^n$ (see \Cref{rem:boundary}) implies $\mu_n$ must restrict to 
\begin{equation} \label{eqn:res1}
 \begin{tikzcd}
 \mu_{n-1} \brk{n}: \partial \K^n \times X^{\times n } \rar[] & \K^n \times X^{\times n } \rar["\mu_n"] & X 
 \end{tikzcd}
 \end{equation}
Another collection of restrictions come from the fact that the operad $\A_n$, by virtue of being a unital operad, admits degeneracy maps (see \Cref{eqn:degeneracy}). Let 
\[
\begin{tikzcd}
 \delta_i: X^{\times (n-1)} \arrow{rrrr}{1^{\times (i-1)} \times \mu_0 \times 1^{\times(n-i-1)}} &&&& X^{\times n}
 \end{tikzcd} \]
 where $\mu_0: \ast \to X$ denote the unit map. Then $\mu_n$ must make the diagram 
 \begin{equation} \label{eqn:res2}
 \begin{tikzcd}
 \K^n \times X^{\times (n-1) } \dar["s_i \times 1"'] \rar["1 \times \delta_i"] & \K^n \times X^{\times n} \dar["\mu_n"]  \\
\K^{n-1}\times X^{\times (n-1) }  \rar["\mu_{n-1}"] & X
 \end{tikzcd}
  \end{equation}
commute. If we define 
\[ X^{\times n}_{[n-1]}:= \{(x_1, \dots, x_n): \text{$x_i = \ast$ for some $i$}  \} \subset X^{n}, \]
where $\ast$ is the image of $\mu_0$, then $\A_{n-1}$-structure on $X$ determines a map
\[ \mu_{[n-1]}: \K^n \times X^{\times n}_{[n-1]} \to X  \]
and  \Cref{eqn:res2} implies $\mu_n$ must restrict to $\mu_{[n-1]}$ on $\K^n \times X^{\times n}_{[n-1]}$. The map $\mu_{[n-1]}$ 

Let $\Phi_n(X)$ denote the pushout 
\begin{equation} \label{eqn:Phi_n}
\begin{tikzcd} 
\partial \K^n \times X^{\times n}_{[n-1]} \rar \dar & \K^n \times X^{\times n}_{[n-1]} \dar \\
\partial \K^n \times X^{\times n} \rar & \Phi_n(X). 
\end{tikzcd}
\end{equation}
 The $\A_{n-1}$-structure on $X$ induces a map 
\[ \Phi_n(\mu): \Phi_n(X) \to X. \]
There is a natural map $\Phi_n(X) \to \K^n \times X^{\times n}$ and by \Cref{eqn:res1} and \Cref{eqn:res2}, any map $ \mu_n: \K^n \times X^{\times n} \to X $ which restricts to $\Phi_n(\mu)$ extends the $\A_{n-1}$-structure on $X$ to an $\A_n$-structure. Suppose there is an object $\sigma_n(X)$ such that 
\[ \sigma_n(X) \to \Phi_n(X) \to \K^n \times X^{\times n}\]
is a cofiber sequence, then we get the following theorem. 
\begin{thm}[Stasheff] An $\A_{n-1}$-algebra structure on $X \in \Top$ can be extended to an $\A_n$-structure if and only if the composite 
\[  
\begin{tikzcd}
\sigma_n(X) \rar & \Phi_n(X) \rar["\Phi_n(\mu)"]& X 
\end{tikzcd} \]
is homotopic to the trivial map.
\end{thm}

A similar obstruction theory holds for $\A_n$-structures on objects of $\Sp$. Suppose $\R\in \Sp$ admits an $\A_{n-1}$-algebra structure, i.e. a collection of maps 
\[ \mu_i: \K^i_+ \sma \R^{\sma i} \to \R  \]
for $0 \leq i \leq n-1$. Assuming  the unit map 
\[ \mu_0: \sph \to \R \] 
to be cofibration, one can construct the object $\R^{\sma n}_{[n-1]}$ as a colimit. Let $[n]$ denote the category of ordered subsets of the order set $\{ 1 < \dots < n \}$. Let $\mathcal{C}(n,n-1)$ be the full subcategory consisting of subsets of at most $n-1$ elements. Let 
\[ \mathcal{F}^\R_{n,n-1}: \mathcal{C}(n,n-1) \to \Sp \]
denote the functor which sends a subset with $i$ element to $\R^{\sma i}$ and uses the unit map $\mu_0$ to establish the necessary maps between $\R^{\sma i}$ and $\R^{\sma j}$. Define 
\[ \R^{\sma n}_{[n-1]} := \colim_{\mathcal{C}(n,n-1)} \mathcal{F}^{X}_{n,n-1}. \]
One can construct a spectrum  $\Phi_n(\R)$ by replacing $\K^n$ by $\K^n_+$, the cartesian product $\times$ by the smash product $\sma$ and $X$ by $\R$ in the \Cref{eqn:Phi_n}.  Since cofiber sequences are fiber sequences in $\Sp$, the object $\sigma_n(\R)$ always exists, it is the fiber of the map 
\[ 
\begin{tikzcd} 
\Phi_n(\mu): \Phi_n(\R) \rar & \K^n_+ \sma \R^{\sma n}.
\end{tikzcd}
\] 
\begin{thm}[Stasheff] \label{thm:Anspec} An $\A_{n-1}$-algebra structure on $\R  \in \Sp$ can be extended to an $\A_n$-structure if and only if the composite 
\[  
\begin{tikzcd}
\sigma_n(\R) \rar & \Phi_n(\R) \rar["\Phi_n(\mu)"]& \R 
\end{tikzcd} \]
is homotopic to the trivial map.
\end{thm}
\subsection{ A conjecture of Mahowald}
\Cref{thm:Anspec} has some immediate application in the context of two-cell complexes. Let $\tau \in \pi_{k-1}(\sph)$ and let $\mr{C}(\tau)$ denote the cofiber of $\tau$. Since the discussion takes place in $\Sp$ (and not in the homotopy category $h \hspace{-1.7pt}\Sp$), it is important to give $\mr{C}(\tau)$ an explicit pointset model. In our category of spectra, the sphere spectrum $\sph$ is fibrant but not cofibrant, therefore $\tau$ may not be realized as a map from $\Sigma^{k-1}\sph$ to $\sph$. However, $\tau$ can be realized as a map 
\[ \tau: \Sigma^{k-1}\csph \to \sph,\]
where $\csph$ is the cofibrant relacement of $\sph$. We specifically choose $\csph$ to be the cofibrant replacement of \cite[Chapter~$2$]{EKMM}. Moreover, we choose and fix non-canonical and non-coherent isomorphisms 
\[\underbrace{(\csph \sma \dots \sma \csph)}_{\text{$n$ times}}\iso \csph \]
 for all $n>0$. What we gain is the isomorphism 
\begin{equation} \label{eqn:specialiso}
 (\Sigma^k \csph)^{\sma n} \iso \Sigma^{kn}\csph 
\end{equation}
 for all $k \geq 0$ and $n>0$.
\begin{notn} For efficiency of notations, we will denote $\Sigma^{n}\csph$ by $\csph^n$ and  $C\Sigma^{n}\csph$ (the cone on $ \Sigma^{n}\csph$) by $\disk^{n+1}$. 
\end{notn}
We choose the pointset model of $\mr{C}(\tau)$ to be the pushout in the diagram 
\begin{equation} \label{eqn:pointset} 
\xymatrix{
\csph^{k-1} \ar[r]^{\tau} \ar[d] & \sph \ar[d] \\
\disk^k \ar[r] & \mr{C}( \tau).
} 
\end{equation} 
The `inclusion map' from $\sph \to \mr{C}(\tau)$ serves as the unit map. With this model of $\mr{C}(\tau)$ and Equation~\ref{eqn:specialiso}, it can be easily seen that $\K^n_+ \sma \mr{C} (\tau)^{\sma n}$ is related to $\Phi_n(\mr{C}(\tau))$ via the pushout diagram  
\begin{equation} \label{eqn:attach1}
\xymatrix{
\csph^{n(k+1)-3} \ar[r]^-{\iota} \ar[d]& \Phi_n(\mr{C}(\tau)) \ar[d]\\
\disk^{n(k+1)-2} \ar[r]_-{\tilde{\iota}} & \K^n_+ \sma (\mr{C}(\tau))^{\sma n}.
}
\end{equation}
This discussion shows that $\sigma_n(\mr{C}(\tau)) \simeq \Sigma^{n(k+1)-3}\sph$. Applying \Cref{thm:Anspec}, we get:
\begin{cor} \label{cor:Antwocell}Let $\mr{C}(\tau)$ denote the cofiber of $\tau \in \pi_{k-1}(\sph)$. The obstruction to extending  an $\A_{n-1}$-algebra structure on $\mr{C}(\tau)$  to an $\A_n$-structure is a class in $\pi_{n(k+1) -3}(\mr{C}(\tau))$. 
\end{cor}
Here are some straightforward applications of \Cref{cor:Antwocell}
\begin{ex} \label{ex:oddmoore} When $p$ is odd, $\pi_i(\M{p}{i}) = 0 $ for $1 \leq i \leq 2p-4$. It follows that $\M{p}{i}$ admits $\A_{p-1}$-structure for all $i$. 
\end{ex}
\begin{ex} \label{ex:moore2} Oka \cite{Oka} showed that $\M{2}{i}$ admits an $\A_3$-structure for $i \geq 2$. Since $\pi_5(\M{2}{i}) = 0$, it follows that $\M{2}{i}$ admits $\A_4$-structure for $i \geq 4$. This gives us the best possible answer for $\M{2}{2}$ and $\M{2}{3}$ and is a part of the data in \Cref{table2}.
\end{ex}
One do not expect $\M{p}{i}$ to admit an $\A_\infty$-structure, perhaps because of the following general conjecture for two-cell complexes which Mark Mahowald communicated to the author during a private conversation.
\begin{conj}[Mahowald] \label{conj:mah} $\mr{C}(\tau)$ admits an $\A_\infty$-structure if and only if $\tau \simeq \ast$. 
\end{conj}

\subsection{Homotopy $\A_n$-maps }
An $\A_n$-map $f: X \to Y$ between two objects in $\Top$ with $\A_n$-structure, is a map for which the diagram 
\[ 
\begin{tikzcd}
\K^{i} \times X^{\times n} \rar["\mu_i^X"] \dar["1 \times f^{\times i}"'] & X \dar["f"] \\
\K^{i} \times Y^{\times n} \rar["\mu_i^Y"']  & Y 
\end{tikzcd}
\]
commutes for all $0 \leq i \leq n $. A homotopy $\A_n$-map is a map where we only require the above diagrams to commute up to homotopy, however our choices of homotopy must satisfy some coherence conditions. These coherences are best described in using a system of polytopes $\{ \mr{J}^n: n \geq 1 \} $ called \emph{multiplihedra}. 

 $\mr{J}^1$ is simply a point. $\mr{J}^2$ is the unit interval $[ 0, 1]$.  A homotopy $\A_2$-map consists of $f_1: X \to Y$ which preserves the unit, a homotopy 
\[ f_2: \mr{J}^2 \times X^{\times 2} \to Y \]
 which parametrizes a homotopy 
\[  \mu_2^Y\circ (f \sma f) \simeq f \circ \mu_2^X.\]
The two natural restriction of  $f_2$ to $\mr{J}^2 \times X$ via the unit map of $X$ must be the constant homotopy at $f_1$. $\mr{J}^3$ is a hexagon. To extend a homotopy $\A_2$-map to a homotopy $\A_3$-map, one must provide a map 
\[ f_3: \mr{J}^3 \to \Func(X^{\times 3}, Y)\]
which is compatible with predefined maps $f_1$ and $f_2$.
 \begin{figure}[h]  \label{fig:J3}
\centering
\begin{tikzpicture}
\draw (-1, 0) -- (1,0) -- (2.5, 2) -- (1,4) -- (-1, 4) -- (-2.5, 2) -- (-1, 0);
\node[fill=none] at (-2.3,0) {{\tiny $(f(x_1)f(x_2))f(x_3) - $}};
\node[fill=none] at (2.3,0) {{\tiny $ - f(x_1)(f(x_2)f(x_3))$}};
\node[fill=none] at (0,-.3) {{\tiny $ f(x_1)f(x_2)f(x_3)$}};
\node[fill=none] at (-2.9,1) {{\tiny $f_2(x_1,x_2)f(x_3)$}};
\node[fill=none] at (-3.5,2) {{\tiny $f(x_1x_2)f(x_3) - $}};
\node[fill=none] at (-2.9,3.1) {{\tiny $f_2(x_1x_2,x_3)$}};
\node[fill=none] at (-2,4) {{\tiny $f((x_1x_2)x_3) - $}};
\node[fill=none] at (0,4.3) {{\tiny $f(x_1x_2x_3)$}};
\node[fill=none] at (2,4) {{\tiny $-f(x_1(x_2x_3))$}};
\node[fill=none] at (2.6,3.1) {{\tiny $f_2(x_1, x_2x_3)$}};
\node[fill=none] at (3.4,2) {{\tiny $ - f(x_1)f(x_2x_3)$}};
\node[fill=none] at (2.8,1) {{\tiny $f(x_1)f_2( x_2, x_3)$}};
\end{tikzpicture}
\caption{The polytope $\mr{J}^3$}
\end{figure} 
In general, the compatibility criteria arises from the fact that the maps $\{f_r: 1 \leq r \leq n-1\}$ along with the maps \[ \{\mu_r^X,\mu_r^Y : 0 \leq r \leq n-1 \} \] determine the map $\partial f_n$ defined on $\partial \mr{J}^n$. On the boundary of $\mr{J}^n$ there are two disjoint cells homeomorphic to $\K^n$, one of which supports the map $ (f_1^{\times n}) \circ \mu_n^Y$ and the other supports $f_{1} \circ \mu_n^X$. Therefore, the map $f_n$ in some sense is a `homotopy with additional coherence conditions' between these two maps. 

Multiplihedra and their connection to homotopy $A_n$-maps were first considered by Stasheff~\cite{Sta1}. Boardman and Vogt~\cite{BV} expressed the full combinatorial descriptions of these multiplihedra using the language of colored operads and metric trees. \cite{DF, F, IM} are  among prominent articles in the literature containing detailed descriptions of multiplihedra. 

\subsection{An obstruction theory for homotopy $\A_n$-maps}

 Stasheff developed an obstruction theory for homotopy $\A_n$-maps in the category of $\Top$, by constructing what is called a \emph{truncated bar complex} for $\A_n$-algebras. It was using these techniques, Stasheff \cite{StasheffPoV}[Theorem~$7.4$] proved that $\mr{S}^7$ does not admit a homotopy associative multiplication. We partially adopt \cite{VKT2}[$\mathsection$ 4] in our description of the Stasheff's bar construction.
 
 As a warm up, we briefly recall the construction of a bar complex for a strictly associative monoid in $\Top$. A strictly associative monoid is an object $\H \in \Top$ with a unit map $\iota: \ast \to \H$ and a multiplication map $$\mu: \H \times \H \to \H$$ which is compatible with the unit map, i.e. $\mu\circ (\iota \times 1_\H) = 1_\H = \mu \circ (1_\H \times \iota)$, and is strictly associative, i.e. $\mu \circ (\mu \times 1_\H) = \mu \circ (1_\H \times \mu)$. A left $\H$-module $\mr{M}$ is an object in $\Top$ with a map 
\[ m: \H \times \mr{M} \longrightarrow \mr{M} \]
which satisfies the usual conditions. Similarly, a right $\H$-module $\mr{M}$ is an object in $\Top$ with a map 
\[ n: \mr{N} \times \H \longrightarrow \mr{N} \]
satisfying the usual conditions. 
Let $\Delta$ be the category of finite, nonempty, totally ordered sets with order preserving maps as morphisms. Let $\mr{sk}(\Delta)$ denote the skeleton category of $\Delta$. Objects of $\mr{sk}(\Delta)$ are finite ordinals. We denote the ordinal $n+1$ by $[n]$ or $\lbrace 0< \dots < n \rbrace$. Given a strictly associative monoid $\H$, a left $\H$-module $\mr{M}$ and right $\H$-module $\mr{N}$, we can define a functor 
\[ \mathcal{B}(\mr{M},\H,\mr{N}): \mr{sk}(\Delta)^{op} \longrightarrow \Top \] 
which sends $[n] \mapsto \mr{M} \times \H^{n} \times \mr{N}$. On the other hand, we have a functor 
\[ | \Delta|: \mr{sk}(\Delta) \to \Top\]
such that $[n] \mapsto \Delta_n$, where $\Delta_n$ is the geometric  $n$-simplex. The two sided bar-complex $\mr{B}(\mr{M},\H,\mr{N})$ is the coend of the functor $ \mathcal{B}(\mr{M},\H,\mr{N}) \times |\Delta |$ or equivalently the quotient space 
\[ \mr{B(M,H,N)} = \coprod_{n \geq 0} \Delta_n \times (\mr{M} \times \H^{n} \times \mr{N})/ \sim\]
where $\sim$ is the usual identification expressed in terms of face and degeneracy maps. We define 
\[ \mr{BH} = \mr{B}(\ast, \H,\ast)\]
as the bar complex of $\H$. 

For an $\A_{n}$-algebra $\H$ in $\Top$, a \emph{right $\A_k$ $\H$-module} $M$, is an object in $\Top$ with maps 
\[ f_r:\K^{r+1} \times \mr{M} \times \mr{H}^{\times r} \to \mr{M}\]
for $1\leq  r \leq k$, satisfying the usual compatibility conditions with the higher order multiplication of $H$. Similarly, a \emph{left $\A_k$ $H$-module} $N$ is an object in $\Top$  with maps 
\[ g_r:\K^{r+1} \times \H^{\times r} \times \mr{N} \to \mr{M}\]
for $0 \leq  r \leq k$, satisfying similar compatibility conditions. The two-sided bar construction described above does not make sense because $\mathcal{B}(M,H,N)$ fails to be a functor when the multiplication is not strictly associative. However, this issue can be resolved. Roughly speaking, the idea is to inflate the morphism classes between two objects in $\mr{sk}(\Delta)^{\mr{op}}$ to accommodate $\A_n$-structures. More precisely, we enrich the category $\mr{sk}(\Delta)^{\mr{op}}$ over $\Top$ using the operad $\A_{\infty}$ by setting the morphism class between $[l]$ and $[k]$ as the topological space 
\[ \bigsqcup_{f: [l] \to [k]} \K^{[f]},\]
where $\K^{[f]} = \prod_{i \in [k]} \K^{f^{-1}(i)}$. Denote the resultant category by $\Delta^{\mr{op}}_{\infty}$.

 For an $\A_{\infty}$-algebra $\H$, it is straightforward to verify that 
\[ \mathcal{B}_{\infty}(\mr{M},\H,\mr{N}): \Delta^{\mr{op}}_{\infty} \to \Top \]
sending $[n] \mapsto (\mr{M} \times \H^{n} \times \mr{N})$ is indeed a functor. For brevity, let us denote $(\Delta^{\mr{op}}_{\infty})^{\mr{op}}$ by $\Delta_{\infty}$. There is also a canonical functor
\[ |\K|: \Delta_{\infty} \to \Top\]
where $[n] \to \K^{n+2}$. 

\begin{defn} \label{defn:barfinity}
For an $\A_{\infty}$-algebra $\H$, a right $\A_{\infty}$ $\H$-module $\mr{M}$ and a left $\A_{\infty}$ $\H$-module $\mr{N}$, the two-sided bar complex $\mr{B(M,H,N)}$ is the coend 
\[ \mr{B(M,H,N)} = \int^{\Delta_{\infty}} \mathcal{B}_{\infty}(\mr{M,H,N}) \times |\K|.\] 
\end{defn}
\begin{defn} For an $\A_{\infty}$-algebra $\H$, the bar complex of $\H$ is the topological space 
\[ \mr{BH} = \mr{B}(\ast, \H, \ast)\]
\end{defn}
\begin{rmk}We intentionally used the same notation for the two sided bar-complex $\mr{B(M,H,N)}$, when $\H$ is a strictly associative monoid and when $\H$ is a $\A_{\infty}$-algebra. This is because a strictly associative monoid in $\Top$ is automatically an $\A_{\infty}$-algebra and the two different bar constructions yield the same space up to homotopy.   
\end{rmk}
 For an $\A_n$-algebra $\H$ where $n< \infty$, one can only construct a truncated version of the bar complex, called the $n$-truncated bar complex, simply by replacing $\Delta_{\infty}$
 by $\Delta_{\leq n}$, the full subcategory of $\Delta_{\infty}$ with objects $[k]$ for $0 \leq k \leq n$, in the discussion above.
 
 For an $\A_{n}$-algebra $\H$, a right $\A_{n}$ $\H$-module $\mr{M}$ and a left $\A_{n}$ $\H$-module $\mr{N}$ let 
\[ \mathcal{B}_n(\mr{M},\H,\mr{N}): \Delta_n^{op} \to \Top\]
be the functor that sends $[k] \to \mr{M} \times \H^{k} \times \mr{N}$. We also have a functor 
\[ |\K^n|: \Delta_{\leq n} \to \Top\]
such that $[k] \mapsto \K^{k+2}$.  
\begin{defn}
For an $\A_{n}$-algebra $\H$, a right $\A_{n}$ $\H$-module $\mr{M}$ and a left $\A_{n}$ $\H$-module $\mr{N}$, the two-sided bar complex $\mr{B(M,H,N)}$ is the coend 
\[ \mr{B}_n(\mr{M},\H,\mr{N}) = \int^{\Delta_{\leq n}} \mathcal{B}_{n}(\mr{M},\H,\mr{N}) \times |\K^n|.\] 
\end{defn}
The space $\mr{B}_n(\mr{M, H, N})$ is a quotient of the space 
\[ \coprod_{0\leq k \leq n}\K^{k+2} \times \mr{M} \times \H^k \times \mr{N}.\]
\begin{defn} For an $\A_{n}$-algebra $\H$ the bar complex of $\H$ is the topological space 
\[ \mr{B}_n\H = \mr{B}_n(\ast, \H, \ast)\]
\end{defn}
\begin{ex} $\mr{S}^1$ admits a  strict associative multiplication when viewed as units in $\mathbb{C}$. The $n$-truncated bar complex $\mr{B}_n\mr{S}^1$ is homotopy equivalent to the projective space $\CP^n$.
\end{ex}
\begin{ex} $\mr{S}^3$ admits a  strict associative multiplication when viewed as units in quaternion $\mathbb{H}$. The $n$-truncated bar complex $\mr{B}_n\mr{S}^3$ is homotopy equivalent to the projective space $\mathbb{HP}^n$.
\end{ex}
\begin{ex}   $\mr{S}^7$ admits a multiplication when viewed as units in octonions $\mathbb{O}$. However, $\mr{S}^7$ cannot support an homotopy associative multiplication. If it does, then we can construct $\mr{B}_3\mr{S}^7 \cong \mathbb{OP}^3$, and its cohomology ring is the truncated polynomial ring \[ \H^*(\mathbb{OP}^3; \mathbb{Z}/3 )\cong \mathbb{Z}/3[u_8]/(u_8^4),\] where $u_8$ is a generator in degree 8.  By axioms of Steenrod operation $\mathcal{P}^4(u_{8})$ must equal $u_8^3$. However, Adem relation $\mathcal{P}^4 = \mathcal{P}^1 \mathcal{P}^3$ and  the fact that  $$\mathcal{P}^3(u_8) \in \H^{20}(\mathbb{OP}^3; \mathbb{Z}/3 ) = 0,$$  imply $\mathcal{P}^4(u_8)$ must equal zero, which is a contradiction. 
\end{ex} 

In this paper we make use of the following result of Stasheff, and a rigidification theorem of Boardman and Vogt thereafter.

\begin{thm}[Stasheff] \label{thm:barasso} A map $f: X \to Y$, where $X$ and $Y$ are  strictly associative monoids in $\Top$, extends to a unital homotopy $A_n$-map if and only if the map $\Sigma f:\Sigma X \to \Sigma Y$ extends to a map \[ \mr{B}_n f : \mr{B}_n X \to \mr{B}_n Y \]
where $\mr{B}_n X$ and $\mr{B}_n Y$ are $n$-truncated bar complexes of $X$ and $Y$ respectively.
\end{thm}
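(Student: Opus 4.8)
The plan is to prove Theorem~\ref{thm:barasso} by comparing, one filtration step at a time, the simplicial‑degree filtration of the truncated bar complex with the multiplihedral data that constitutes a unital homotopy $A_n$‑map. First I would recall that, because $X$ and $Y$ are strictly associative and unital, the coend defining $B_n(-)$ reduces to the naive quotient: $B_nX$ is $\coprod_{0\le k\le n}\K^{k+2}\times X^{\times k}$ modulo the face and degeneracy relations, and it carries a filtration $\ast=F_0\subset F_1\subset\dots\subset F_n=B_nX$ with $F_k$ the image of $\coprod_{j\le k}\K^{j+2}\times X^{\times j}$. One checks that $F_{k-1}\cong B_{k-1}X$, that $F_1\cong\Sigma X$ (reduced suspension with the unit as basepoint; e.g.\ $B_1S^1\cong\CP^1$), and that $F_k$ is obtained from $F_{k-1}$ by attaching the cell $\K^{k+2}\times X^{\times k}$ along the subspace $A_k(X):=(\partial\K^{k+2}\times X^{\times k})\cup(\K^{k+2}\times X^{\times k}_{[k-1]})$, where $X^{\times k}_{[k-1]}$ is the locus of tuples with at least one coordinate equal to the unit; the gluing uses the face maps (which employ the multiplication of $X$ and the constant module maps) and the degeneracies (which insert the unit). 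A unit‑preserving $f\colon X\to Y$ induces $\Sigma f$ on $F_1$, so extending it to $B_nf$ is exactly a sequence, for $k=2,\dots,n$, of relative extension problems: given a map $A_k(X)\to B_kY$, extend it over $\K^{k+2}\times X^{\times k}$.

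Next I would set up the combinatorial comparison that is the crux of the whole argument. For each $k$ there is a ``coordinatewise'' map $g_k\colon\K^{k+2}\times X^{\times k}\to B_kY$, $(\omega;x_1,\dots,x_k)\mapsto(\omega;f(x_1),\dots,f(x_k))$, which already solves the $k$‑th extension problem on the degeneracy locus and on the $\ast$‑collapsing faces of $\K^{k+2}$, but which disagrees with the forced boundary values coming from $B_{k-1}f$ on exactly those faces of $\K^{k+2}$ where the multiplication of $X$ is applied. The reconciling datum --- a coherent homotopy between $g_k$ and the prescription coming from $B_{k-1}f$, carried along the collar of those faces --- is organized precisely by the multiplihedron: the $k$‑th extension problem for $B_{k-1}f$ is naturally equivalent to the problem of extending the boundary map $\partial f_k\colon\partial\LL^k\times X^{\times k}\to Y$ of Section~\ref{sec:mapobasso} (determined by $f_1=f,\dots,f_{k-1}$ together with the multiplications of $X$ and $Y$) over $\LL^k\times X^{\times k}$, subject to the unitality constraint. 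The input here is the classical description of the multiplihedra via the associahedra (see \cite{Sta,BV,DF,IM,F}): the two associahedral boundary faces $\K^k[1],\K^k[2]\subset\partial\LL^k$ correspond to ``multiply in $X$, then apply $f$'' and ``apply $f$, then multiply in $Y$'', while the remaining boundary of $\LL^k$ matches the lower cells $\K^{j+2}\times Y^{\times j}$ ($j<k$) of $B_kY$ and the degeneracy locus. Under this equivalence an extension over $\K^{k+2}\times X^{\times k}$ is the same datum as an extension $f_k$ over $\LL^k\times X^{\times k}$, and compatibility with the degeneracy locus is the same as unitality of $f_k$.

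With the comparison in hand the two implications become bookkeeping, which I would then carry out. For the forward direction, given a unital homotopy $A_n$‑map $\{f_i\}_{0\le i\le n}$ I would set $B_nf=\Sigma f$ on $F_1$ and extend over $F_2,\dots,F_n$ in turn, using $f_k$ to solve the $k$‑th extension problem; this produces a map $B_nf\colon B_nX\to B_nY$ restricting to $\Sigma f$. For the converse, given $B_nf$ restricting to $\Sigma f$ on $F_1$, I would restrict it to each cell $\K^{k+2}\times X^{\times k}$ and read off, via the comparison run in reverse, a map $f_k\colon\LL^k\times X^{\times k}\to Y$ for $1\le k\le n$; because $B_nf$ is a genuine map it respects every face and degeneracy identification, so the resulting $\{f_k\}$ satisfy the compatibility and unitality axioms of a unital homotopy $A_n$‑map. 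I should note that the passage $\{f_i\}_{i\le k}\mapsto B_kf$ is not injective --- different choices in the extension problems give different maps --- but the two existence statements agree at each stage, which is all the theorem asserts, so an induction on $k$ closes both directions simultaneously.

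The hard part will be making the combinatorial comparison of the second paragraph precise: identifying $\LL^k$, with its boundary stratification, as the parameter space that governs the remaining freedom in the degree‑$k$ extension problem for the bar complex once the lower skeleton and the degeneracies have been pinned down. I expect that one must either exhibit an explicit polytopal model of $\LL^k$ and an explicit cellular subdivision of $\K^{k+2}$, together with a stratum‑preserving homeomorphism of the relevant subspaces --- the route followed in \cite{BV,IM,F} --- or else characterize both extension problems by a single universal property, such as the metric‑tree / $[0,1]$‑parametrized description of homotopy $A_k$‑maps from \cite{BV}, and deduce the equivalence formally. Everything else --- the structure of the filtration, the inductive bookkeeping, and checking that the unitality constraint corresponds exactly to the degeneracy locus --- is routine once this identification is available; this is essentially Stasheff's Theorem~8.4 in \cite{STAbook}, to which I would refer for the complete details.
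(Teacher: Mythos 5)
Your outline---filtering $B_nX$ by simplicial degree, identifying the stage-$k$ cell-attachment problem (relative to the faces and the degeneracy locus) with the multiplihedron extension problem for $f_k$, and closing both implications by induction---is exactly the classical Stasheff argument, and the paper itself gives no proof of this statement: it simply cites \cite[Theorem~8.4]{STAbook}, the same source to which you defer for the combinatorial identification of $\LL^k$ with the remaining freedom in the degree-$k$ extension. So your proposal is correct in approach and is, if anything, more detailed than what the paper records.
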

 
Work of Boardman and Vogt \cite[Chapter~4]{BV} provides a technique to replace a homotopy $\A_n$-map (unital or nonunital) by an $\A_n$-map in the homotopy category of $\A_n$-algebras. Given a topological operad $\mathcal{O}$, they make two important constructions, 
\begin{itemize} 
\item an endofunctor $\mr{W}$ from the category of topological operads to itself such that there is a natural map of operads
 \[ w: \mr{W} \mathcal{O} \to \mathcal{O} \]
which is a weak equivalence, and, 
\item a functor $\mr{U}:\Top[\mathcal{O}] \to \Top[\mathcal{O}]$, where $\Top[\mathcal{O}]$ is the category of $\mathcal{O}$-algebras in $\Top$, such that there is a natural map 
\[ u: X \to \mr{U}X \]
which is a weak equivalence and can be extended to a `homotopy $\mathcal{O}$-map' (see Remark~\ref{rem:translate}). This map is called the 
\emph{universal homotopy $\mathcal{O}$-map}.
 \end{itemize} 
These constructions conspire to give us the following theorem, which is essentially a special case of  \cite[Theorem~4.23(c)]{BV}. 
\begin{thm} \label{thm:Anfix}Let  $X$ and $Y$ be $\A_n$-algebras and  $u:X \to\mr{U}X$ be the universal  homotopy $\A_n$-map. Then
 for any  homotopy $\A_n$-map  $f:X \to Y$ has a unique factorization 
\[ 
\xymatrix{
X \ar[r]^f \ar[d]_{u}^{\simeq}& Y \ar@{=}[d] \\
\mr{U}X \ar[r]^{h} & Y
}
\]
such that $h$ is a $\mr{W}\A_n$-map.
\end{thm}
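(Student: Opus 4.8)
The plan is to obtain Theorem~\ref{thm:Anfix} as a direct application of the universal property of the map $u\colon X\to UX$, which is the special case $\mathcal{O}=\A_n$ of \cite[Theorem~4.23(c)]{BV}; the only real work is to match our multiplihedra-based notion of a unital homotopy $A_n$-map with the Boardman--Vogt framework. The first step is to recall that Boardman and Vogt encode homotopy morphisms operadically: for a topological operad $\mathcal{O}$ there is an auxiliary two-coloured operad $\mathcal{M}\mathcal{O}$ whose algebras are triples $(X,Y,g)$ with $X,Y$ strict $\mathcal{O}$-algebras and $g$ a strict $\mathcal{O}$-map, and the algebras over the Boardman--Vogt resolution $W\mathcal{M}\mathcal{O}$ are triples $(X,Y,g)$ with $X,Y$ now $W\mathcal{O}$-algebras and $g$ a \emph{homotopy} $\mathcal{O}$-map in the sense of Remark~\ref{rem:translate}. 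I would then verify that, for $\mathcal{O}=\A_n$, the arity-$i$ space of the mixed colour of $W\mathcal{M}\A_n$ is exactly the multiplihedron $\LL^i$ equipped with the face stratification described in Subsection~\ref{subsec:mapobasso}: the two distinguished $\K^i$-faces carrying $g\circ\mu_i^X$ and $\mu_i^Y\circ g^{\otimes i}$, the remaining boundary faces assembled from lower $\LL^r$ together with the $\A_n$-structure maps, and the degeneracies $\LL^n\to\LL^{n-1}$ matching the filtration $X^{\otimes n}_{[n-1]}\hookrightarrow X^{\otimes n}$. Granting this dictionary, a unital homotopy $A_n$-map $f\colon X\to Y$ in our sense is precisely a $W\mathcal{M}\A_n$-algebra structure on $(X,Y)$ refining the given strict $\A_n$-structures.

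Next I would invoke the universal property built into the functor $U$. By construction, $UX$ carries a $W\A_n$-algebra structure (pulled back along $w\colon W\A_n\to\A_n$), the map $u\colon X\to UX$ is the universal unital homotopy $A_n$-map out of $X$, and for every $\A_n$-algebra $Y$ composition with $u$ sets up a bijection between the set of $W\A_n$-algebra maps $UX\to Y$ and the set of unital homotopy $A_n$-maps $X\to Y$, sending a $W\A_n$-map $h$ to $h\circ u$ (the composite of a homotopy $\A_n$-map with a strict $W\A_n$-map, which is again a homotopy $\A_n$-map). This is the content of \cite[Theorem~4.23(c)]{BV}; that $u$ itself is a weak equivalence is part of the same construction, since $w$ is a weak equivalence. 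Applying this bijection to the given $f$ yields a unique $W\A_n$-map $h\colon UX\to Y$ with $h\circ u=f$, which is exactly the claimed factorization, and uniqueness is immediate from the bijection.

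A point to be careful about throughout is unitality. Because we work with the \emph{unital} operad $\A_n$ (so $\A_n(0)=\ast$) rather than $\A_n^{\circ}$, the auxiliary operad $\mathcal{M}\A_n$, its resolution $W\mathcal{M}\A_n$, the $W$-replacement $UX$, and hence the resulting map $h$ all carry compatible unit data, so $h$ is automatically a unital $W\A_n$-map and $h\circ u$ a unital homotopy $A_n$-map. I would also note that nothing in the argument requires $n$ to be finite: the $W$-construction and the functor $U$ are defined for an arbitrary topological operad, so $\A_n$ is an allowed input.

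The genuinely non-formal step --- and the one I expect to be the main obstacle --- is the identification of our explicit polytopes $\LL^i$, defined through their boundary face data, with the cells that the Boardman--Vogt $W$-construction attaches to $\mathcal{M}\A_n$, which are described there via metric trees. Checking that the metric-tree stratification reproduces the face structure of $\LL^i$ recorded in Subsection~\ref{subsec:mapobasso}, and that the unital degeneracy operators correspond on both sides, is a finite combinatorial verification; it is implicit in \cite{BV} and spelled out in \cite{DF,F,IM}. Once this dictionary is in place, Theorem~\ref{thm:Anfix} follows formally from the universal property of $u$ recalled above.
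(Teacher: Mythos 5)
Your proposal is correct and takes essentially the same route as the paper: the paper does not give an independent argument but deduces Theorem~\ref{thm:Anfix} directly from \cite[Theorem~4.23(c)]{BV}, with the matching of unital homotopy $A_n$-maps to Boardman--Vogt's $h\A_n$-maps recorded in Remark~\ref{rem:translate}. Your discussion of the two-coloured operad and the identification of the multiplihedra $\LL^i$ with the cells of the $W$-construction is just a more detailed spelling-out of that same dictionary, not a different argument.
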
 
The terminologies used in \cite{BV} are significantly different from the ones used in this paper. This can be a potential source of confusion. Hence, in the following remark, we provide a dictionary between the language in \cite{BV} and the language used in this paper. 
\begin{rmk} \label{rem:translate} In \cite{BV} authors study $K$-colored topological algebraic theories  $\mathcal{B}$ for a finite set $K$ called colors \cite[Definition~2.3]{BV}. For any $K$ colored theory $\mathcal{B}$, they define $\mathcal{B}$-spaces, $\mathcal{B}$-homomorphisms, homotopy $\mathcal{B}$-homomorphisms (which they abbreviated to $\mathcal{B}$-maps \cite[Definition~4.1]{BV}) and homotopy homogeneous $\mathcal{B}$-homomorphisms (which they abbreviated to $h\mathcal{B}$-maps \cite[Definition~4.2]{BV}).   An operad $\mathcal{B}$ in our language is a topological algebraic theory with one color (i.e. $K = \lbrace * \rbrace$) in their language,  a $\mathcal{B}$-algebra in our language is a $\mathcal{B}$-space in their language and a $\mathcal{B}$-map in our language is a $\mathcal{B}$-homomorphism in their language. If we were to define a homotopy $\mathcal{B}$-map between two $\mathcal{B}$-algebras in our language, it would have been equivalent to the definition of  homotopy homogeneous $\mathcal{B}$-map, i.e. an $h\mathcal{B}$-map in their language. Specifically, when $\mathcal{B} = \A_n$, `homotopy $\A_n$-map'  is an $h\mathcal{\A}_n$-map in the language used in \cite{BV}. This can be deduced from the discussions in \cite[Chapter~1.3]{BV} and \cite[Example~2.56]{BV}.
 \end{rmk} 

\section{Moore spectra as Thom spectra}
 \label{Sec:Moorethom}
 The goal of this section is to obtain the Moore spectrum $\M{p}{i}$ as a Thom spectrum associated to a  $p$-adic spherical bundle. We begin with a nontechnical description of a general construction of Thom spectra  following \cite{ABGHR}, avoiding the hard technical work of \cite{ABGHR} and presenting only the gist.

To begin with, we would like to point out that the right adjoint of the  $\sph[ \ ]: \Top \to \Sp$ is not the `zeroth space' functor as one would expect because of  our choice of $\Sp$. However, if the category $(\Top,\ast, \times)$ is replaced with the category of $\ast$-modules $(\TTop, *, \times_{\mathcal{L}})$ (which is Quillen equivalent to $\Top$), then the right adjoint  is naturally weakly equivalent to the zeroth space functor. The \cite{EKMM} category of $\sph$-module admits a loop-suspension adjunction  
\begin{equation} \label{eqn:spectraadj}
\sph[ \ \ ]:\TTop \ \substack{\longrightarrow \\[-0.7ex] \longleftarrow} \ \Sp: \Omega^{\infty}.
\end{equation}
A detailed exposition can be found in  \cite[Section~$3.1$]{ABGHR}. A detailed discussion can be found in \cite{Lind} (also see \cite[Section~$3$]{ABGHR}).

For a ring spectrum $\R$, define $\GL(\R)$ to be the pullback   
\[ 
\xymatrix{
\GL(\R) \ar[r]\ar[d]  & \Omega^{\infty}\R \ar[d] \\
\pi_0(\R)^{\times}  \ar[r]  & \pi_0(\R).
}
\]
 In other words, $\GL(\R)$ is the collection of components of $\Omega^{\infty} \R$ that correspond to the units of $\pi_0(\R)$. Similarly,  for any subgroup $\H$ of $\pi_0(\R)^{\times}$, define $\H(\R)$ as the pullback
\[ 
\xymatrix{
\H(\R) \ar[r]\ar[d]  & \Omega^{\infty}\R \ar[d] \\
\H  \ar[r]  & \pi_0(\R). 
}
\]
When $\H$ is the trivial subgroup, $\H(\R)$ is denoted by $\SL(\R)$ in the literature. If $\R$ is an $\A_{\infty}$-ring spectrum (over the linear isometry operad), then $\H(\R)$ is a group-like monoid in $\TTop$. Therefore, one can perform the two-sided bar-construction 
\begin{equation} \label{eqn:classify}
 \mr{BH}(\R) = \mr{B}_{\times_{\mathcal{L}}}(*,\H(\R)^c,*),
\end{equation}
where $\H(\R)^c$ denotes the cofibrant replacement of $\H(\R)$.  $\mr{BH}(\R)$ is the `classifying space' for the principal $\H(\R)^c$-bundle. If $\R$ is an $\Einfty$-ring spectrum then $\H(\R)$ is a grouplike commutative monoid and is represented by a spectrum, call it $\mr{h}(\R)$.
\begin{notn}
  The conventional notations for $\mr{h}(\R)$ when $\H = \pi_0(\R)^{\times}$ and $\H = \{1\}$ are $\mr{gl}_1(\R)$ and $\mr{sl}_1(\R)$, respectively. We will adhere to the conventional notations for these special cases.
\end{notn}
Now, we explain the  construction of the Thom spectrum associated to a map in $\TTop$
\[ f: X \to \mr{BH}(\R).\]
 Let $\mr{P}$ be the associated principal $\H(\R)^c$-bundle and $\mr{P}'$ be its cofibrant replacement as a right $\H(\R)^c$-module. The spectrum $\sph[\mr{P}']$ admits a right $\sph[\H(\R)^c]$-module structure. On the other hand, there is a natural map 
\[ \gamma: \sph[\H(\R)^c] \longrightarrow \sph[\H(\R)] \longrightarrow \R\]   
which makes $\R$ into a left $\sph[\H(\R)^c]$-module.  The \emph{Thom spectrum} associated to the map $f: X \to \mr{BH}(\R)$ is the derived smash product
\[ {\sf M}(f) := \sph[P'] \underset{\sph[\H(\R)^c]}{\sma}\R.\]
 The construction of Thom spectra is a functor 
\begin{equation} \label{eqn:thomfunctor}
\begin{tikzcd} 
 {\sf M} : \TTop/_{\mr{BH}(\R)} \rar & \Sp_{\R},
 \end{tikzcd}
 \end{equation}
where $\Sp_{\R}$ is the category of $\R$-modules. 
\begin{notn} To distinguish stable homotopy groups from the unstable ones, we denote the  $n$-th unstable homotopy group functor  by
$$\pi_n^u: \Top_* \to \text{Groups}.$$   
The functor $\pi_n^u$ can be extended to $\TTop_*$ via the Quillen adjunction, where $\TTop_*$ denotes the based category of $\ast$-modules.  
\end{notn}
Note that  $\pi_0^{u}(\H(\R)) \iso \H$  by construction and loop-suspension adjunction implies 
\[ \pi_n^{u}(\H(\R)) \cong \pi_n^{u}(\Omega^{\infty}\R) \cong \pi_n(\R)  \]
for $n\geq 1$. Therefore $\pi_1(\mr{BH}(\R)) = \H$ and there is an isomorphism 
\[ \Theta: \pi_n^u(\mr{BH}(\R)) \overset{\iso} \longrightarrow \pi_{n-1}(\R).\]
Understanding the construction of the isomorphism $\Theta$ is crucial in the proof of \Cref{lem:thom}.

 By adapting Steenrod's classification theorem in our settings we get
\[ \pi_n^{u}(\mr{BH}(\R)) \iso \Prin_{\H(\R)^c}(\tsph^n), \]
where $\Prin_{\H(\R)^c}(\tsph^n)$ denotes the isomorphism classes of principal $H(\R)^c$-bundles over a cofibrant $n$-sphere in $\TTop$. Let $\tdisk^n_+$ and $\tdisk^n_-$ be the northern and the southern hemispheres respectively of the $n$-sphere
   \[\tsph^n = \tdisk^n_+ \cup_{\tsph^{n-1}}\tdisk^n_- .\]
Fix a basepoint $x_0$ of $\tsph^n$  on the equator. Let  $\mr{P}(f)$ denote the  principal $\H(\R)^c$ bundle $\mr{P}$ over $\tsph^n$ associated to a map 
\[ f: \tsph^n \to \mr{BH}(\R)\]
The restriction of $\mr{P}(f)$ to $\tdisk^n_+$ and $\tdisk^n_-$, are trivial bundles as the base spaces are contractible. Thus $\mr{P}(f)$ is the pushout in the diagram 
\begin{equation} \label{eqn:principal}
 \xymatrix{
\tsph^{n-1} \times_{\mathcal{L}} \H(\R)^c \ar[rr]^{\tau} \ar[d]_{i} && \tdisk^{n}_{+} \times_{\mathcal{L}} \H(\R)^c \ar[d] \\
\tdisk^{n}_{-} \times_{\mathcal{L}} \H(\R)^c \ar[rr] && \mr{P}(f)
}
\end{equation}
 where $i(x,g) = (x,g)$,  $\tau(x,g) = (x, \theta_f(x)g)$ and $\theta_f:\tsph^{n-1} \to \H(\R)^c$ is the \emph{clutching function} defined on the equator sending 
 \begin{equation} \label{eqn:bp}
 x_0 \mapsto 1_{\H(\R)^c}.
 \end{equation}
 Assigning each principal bundle  over $S^n$ its clutching function produces the isomorphism $\Theta$. 


\begin{lem} \label{lem:thom} Let $\alpha \in \pi_{n-1}(\R)$, $f_{\alpha}: \tsph^{n} \to \mr{BH}(\R)$ so that  $[f_{\alpha}] = \Theta^{-1}(\alpha) \in\pi_n^u(\mr{BH}(\R))$ and 
${\sf M}(f)$ denote the corresponding Thom spectrum. Let $\alpha_{\R}$ denote the map 
\[ 
\begin{tikzcd}
\alpha_{\R}: \Sigma^{n-1}\R \rar & \R \sma\R  \rar & \R.
\end{tikzcd}
\]
Then 
\[ 
{\sf M}(f) \simeq \left\lbrace \begin{array}{ccc} 
\mr{C}(\alpha_{\R}) & \text{when $n \geq 2$,  }\\
\mr{C}(\mathbbm{1}_{\R } - \alpha_{\R}) & \text{when $n =1$.  }
\end{array} \right.
\]

 ${\sf M}(f)$ associated to a map $f: \tsph^{n} \to \mr{BH}(\R)$, where $[f] = \Theta^{-1}(\alpha) \in\pi_n(\mr{BH}(\R))$
\end{lem}

\begin{proof} Since  $[f_{\alpha}] = \Theta^{-1}(\alpha)$, the clutching function   $\theta_{f_{\alpha}}$ must  belong to the class of  \[ \alpha \in \pi_{n-1}^u(H(\R)^c).\] 
Now we apply the functor  $\sph[\underline{\hspace{5pt}}] \underset{\sph[H(\R)^c]}{\sma} \R$ to the diagram in \Cref{eqn:principal}.  When $n \geq 2$, we get ${\sf M}(f_{\alpha})$ as the pushout 
\[ 
\xymatrix{
\Sigma^{n-1}\R \ar[d]_{0}\ar[r]^{\alpha_\R} & \R \ar[d]\\
\R \ar[r] & {\sf M}(f_{\alpha}).
}
\] 
Now consider the case when  $n=1$. Because the basepoint maps to the unit component of $H(\R)$ (see~\Cref{eqn:bp}), we get ${\sf M}(f_{\alpha})$ as the pushout of the diagram 
\[ 
\xymatrix{
\R \ar[d]_{\mathbbm{1}_{\R}}\ar[r]^{\alpha_\R} & \R \ar[d]\\
\R \ar[r] & {\sf M}(f_{\alpha}).
}
\] 
The result follows from the above diagrams.
 \end{proof}
 
The $p$-completion of the sphere spectrum $\hat{\sph}_p$ is an $\Einfty$-ring spectrum. The fundamental group of $\BGL(\hat{\sph}_p)$ is 
\[ \padic_p^{\times} \cong \left\lbrace 
\begin{array}{ccccc}
\mathbb{Z}/(p-1) \times \padic_p & \text{when $p$ is an odd prime}\\
\mathbb{Z}/2 \times \padic_2 & \text{when $p=2$.}
\end{array}
\right.\]
Let \[
\begin{tikzcd}
 \hat{f}_{p,i}: \mr{S}^1 \rar & \BGL(\hat{\sph}_p)
 \end{tikzcd} \]
 denote a map representing the class $1 + p^i u \in \padic_p^{\times}$, where $u$ is a unit. The following result is a straightforward consequence of \Cref{lem:thom} 
\begin{cor}\label{cor:Moorethom}
The Thom spectrum ${\sf M}(\hat{f}_{p,i})$ is homotopy equivalent to $\M{p}{i}$, the $i$-th Moore spectrum at the prime $p$. 
\end{cor}
\section{Proof of the \Cref{main:est1}} 
\label{Sec:miantheorem}

Realizing the Moore spectrum $\M{p}{i}$ as a Thom spectrum has a distinct advantage. Note that  the domain and the codomain of the associated map $\hat{f}_{p,i}$  are $\Ainfty$-spaces, in fact both are  $\Einfty$-spaces. It is a consequence of a theorem, originally due to Lewis \cite[$\mathsection$ IX]{LMS}, that if $\hat{f}_{p,i}$ is an $\A_n$-map then $\M{p}{i}$ admits an $\A_n$-structure.

\begin{thm}[Lewis] \label{thm:Anthom}
 Let $X \in \Top$ be an  $\A_n$-algebra  and admits an $\A_n$-map 
\[ f: X \to \BGL(\hat{\sph}_p)\]
then the Thom spectrum ${\sf M}(f)$ inherits a unital $A_n$-structure.
\end{thm}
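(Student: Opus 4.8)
The statement fuses Lewis's theorem \cite[\S\rom{9}]{LMS} with the $S$-module framework of \cite{ABGHR}, and the plan is to realize $Mf$ as an $\A_n$-algebra by transporting the $\A_n$-algebra structure along the Thom spectrum functor, viewed as a monoidal functor. Throughout, $\R = \csphp^0$, so that $\GG_p$ is a grouplike commutative monoid in $\TTop$ and $B\GG_p$ carries the canonical $\A_n$-structure (indeed an $\A_\infty$-structure) coming from its infinite-loop-space structure. The first thing I would do is rectify $\GG_p$ to a strictly associative topological monoid, so that $B\GG_p$ becomes a strict monoid with a genuine multiplication $\nu \colon B\GG_p \times_{\mathcal{L}} B\GG_p \to B\GG_p$; then the over-category $\TTop/_{B\GG_p}$ is an honest (symmetric) monoidal category under
\[
\bigl(Y_1 \xrightarrow{f_1} B\GG_p\bigr)\otimes\bigl(Y_2 \xrightarrow{f_2} B\GG_p\bigr) \;=\; \bigl(Y_1\times_{\mathcal{L}}Y_2 \xrightarrow{f_1\times f_2} B\GG_p\times_{\mathcal{L}}B\GG_p \xrightarrow{\nu} B\GG_p\bigr),
\]
with unit the based map $\ast \to B\GG_p$, and for a strict monoid the canonical $\A_n$-structure $\mu_i^{B\GG_p}$ is simply the $i$-fold product, constant in the $\K^i$-direction.

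The technical heart of the argument is to prove that the Thom spectrum functor $M\colon \TTop/_{B\GG_p} \to \Sp_{\R}$ is strong monoidal: that there is a natural equivalence $M(f_1)\sma_{\R} M(f_2) \xrightarrow{\ \simeq\ } M(f_1\otimes f_2)$ and an equivalence $M(\ast\to B\GG_p)\simeq \R$. On the level of principal bundles the comparison map is induced by the pushforward $P'_{f_1}\times_{\mathcal{L}} P'_{f_2} \to P'_{f_1\otimes f_2}$ of the external-product bundle along the monoid multiplication $\GG_p\times\GG_p\to\GG_p$; after applying $\sph[-]\sma_{\sph[\GG_p]}\R$ and collapsing $\R\sma_{\sph[\GG_p]}\R\to\R$ via the ring structure of $\R$, this becomes an equivalence precisely because $\GG_p$ is \emph{grouplike}, so that the shearing map $(a,b)\mapsto (a,ab)$ is an isomorphism. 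This is exactly Lewis's argument in \cite[\S\rom{9}]{LMS}, carried out in the \cite{EKMM} category in the same manner as the $\E_\infty$-case of \cite{ABGHR}; the only additional observation needed is that the construction respects the (non-symmetric) monoidal structure and is therefore available for operads without symmetries.

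With that in hand the remainder is formal. Unwinding Definition~\ref{defn:strictAn}, a unital $A_n$-map $f\colon X\to B\GG_p$, where $X$ carries the given unital $\A_n$-structure $\{\mu_i^X\}$, is precisely the datum of a unital $\A_n$-algebra structure on the object $\bigl(X\xrightarrow{f}B\GG_p\bigr)$ of $\TTop/_{B\GG_p}$: the square of Definition~\ref{defn:strictAn} says exactly that each $\mu_i^X\colon \K^i\boxtimes X^{\otimes i}\to X$ is a morphism over $B\GG_p$ for the monoidal product above, and the case $i=0$ records that $f$ preserves units. Since the Stasheff operad $\A_n$ is a non-$\Sigma$ operad, a (lax, a fortiori strong) monoidal functor between monoidal categories carries $\A_n$-algebras to $\A_n$-algebras; applying $M$ therefore endows $Mf\in\Sp_{\R}$ with a unital $\A_n$-algebra structure, and forgetting to $\Sp$ gives the desired unital $A_n$-structure on $Mf$. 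Concretely, the structure map is the composite
\[
\K^i\boxtimes (Mf)^{\sma_{\R} i}\ \cong\ M\bigl(f\circ\mu_i^X\bigr)\ \xrightarrow{\ M(\mu_i^X)\ }\ Mf \qquad (0\le i\le n),
\]
where the first isomorphism comes from the monoidal comparison maps (using that $\K^i$ is contractible for $i\le n$, so the $\K^i$-parametrized product on $B\GG_p$ agrees, up to the relevant bundle isomorphism supplied by the homotopy classification of $\GG_p$-bundles, with $\nu_i\circ\mathrm{pr}$) and the second is the image under $M$ of the over-category morphism $\mu_i^X$; by Remark~\ref{rem:minmap} this data for $0\le i\le n$ determines the whole structure, and its coherence is inherited from the coherence of the monoidal comparison isomorphisms together with the $\A_n$-algebra axioms for $\{\mu_i^X\}$.

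The genuinely hard step is the second paragraph: establishing the strong monoidal structure on the Thom spectrum functor at the point-set level in the \cite{EKMM} category, which requires the rectification of $\GG_p$ to a strict monoid and the verification — via the grouplike/shearing argument of \cite[\S\rom{9}]{LMS} — that the comparison maps $M(f_1)\sma_{\R}M(f_2)\to M(f_1\otimes f_2)$ are equivalences. Once this is set up, the passage from an $\A_n$-map into $B\GG_p$ to an $\A_n$-structure on $Mf$ is automatic, since monoidal functors preserve algebras over a non-symmetric operad such as $\A_n$.
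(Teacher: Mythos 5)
Your proposal is correct and follows essentially the same route as the paper, which itself only gives a brief nontechnical sketch citing Lewis \cite[\S \rom{9}]{LMS} and \cite{ABGHR}: the commuting squares expressing that $f$ is a unital $A_n$-map are fed through the multiplicativity of the Thom spectrum functor to produce the structure maps $\sph[\K^i]\sma (Mf)^{\sma i}\to Mf$, with coherence inherited. Your packaging of this via the monoidal over-category $\TTop/_{B\GG_p}$ and the fact that a (lax) monoidal functor carries $\A_n$-algebras to $\A_n$-algebras is just a more formal rendering of the same argument, at the same level of rigor regarding the cited multiplicativity input.
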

Here is a brief  explanation of the proof of Theorem~\ref{thm:Anthom}. The fact that $f$ is an $\A_n$-map means that we have a commutative diagram 
\[ 
\xymatrix{
\K^i \times X^{\times i} \ar[rr]^{\K^i \times f^{\times i}} \ar[d]^{\mu_i} && \K^i \times \BGL(\hat{\sph}_p) \ar[d] \\
X \ar[rr]_{f} && \BGL(\hat{\sph}_p)
}
\]
for $0 \leq i \leq n$. The Thom spectrum associated to the map $\mu_i \circ (\K^i \times f^{\times i})$ is $\K^i_+ \sma {\sf M}(f)^{\sma i}$. Since the construction of Thom spectrum is functorial \Cref{eqn:thomfunctor}, we get a map 
 \[
 \begin{tikzcd}
 \overline{\mu}_i: \K^i_+ \sma {\sf M}(f)^{\sma i} \rar & {\sf M}(f)
 \end{tikzcd}
  \] 
  for $0 \leq i \leq n$ . It can be shown that the maps $\overline{\mu}_i$ for $0 \leq i \leq n$ induce an  $\A_n$-algebra structure on ${\sf M}(f)$. 

\begin{rmk}
Lewis showed that the Thom spectrum associated to an $\mathcal{O}$-map
\[ f: X \to \SL(\sph)\]
where $\mathcal{O}$ is any topological operad,  admits an $\mathcal{O}$-algebra structure. Lewis worked in the  Lewis-May-Steinberger \cite{LMS} category of spectra. In \cite{ABGHR}, the authors generalized Lewis' result in the \cite{EKMM} category of $\sph$-modules, where they replace $\SL(\sph)$ with general grouplike objects such as $\SL(\R)$ and $\GL(\R)$ for arbitrary  $\Einfty$-ring spectrum $\R$, but restrict themselves to $\Einfty$-structures only. However, combining the work  of Lewis in \cite[$\mathsection$ IX]{LMS} and \cite{ABGHR} one can obtain \Cref{thm:Anthom}.
\end{rmk}

\begin{cor} $\M{p}{i}$ admits an $\A_n$-algebra structure if there exists  a map  
\[
\begin{tikzcd}
 \hat{f}_{p,i}: \tsph^1 \rar & \BGL(\hat{\sph}_p)
 \end{tikzcd} \]
 in the homotopy class $1 + p^iu \in \pi_1(\BGL(\hat{\sph}_p)) \cong \padic_p^{\times}$ (where $u \in \padic_p^{\times}$) which can be extended to an $\A_n$-map. 
\end{cor}

 Stashef's work \Cref{thm:barasso}, along with Boardman and Vogt's rigidification result \Cref{thm:Anfix}, implies that $\hat{f}_{p,i}$ can be extended to an $\A_n$-map if and only if $\Sigma \hat{f}_{p,i}$ extends to a map $\hat{f}_{p,i}^{(n)}$ so that the diagram 
 \begin{equation} \label{support1}
 \begin{tikzcd}
 \Sigma \tsph^1 \arrow[d, hook] \rar["\hat{f}_{p,i}"] & \Sigma \BGL(\hat{\sph}_p) \arrow[d,hook] \rar["\iota_1"] & \mr{B}^2\GL(\hat{\sph}_p) \\
 \CP^n \arrow[r,  dashed, " \hat{f}_{p,i}^{(n)} "'] &  \mr{B}_n(\BGL(\hat{\sph}_p)) \arrow[ru, "\iota_n"']
 \end{tikzcd}
 \end{equation}
 commutes. Note that the exists of $\hat{f}_{p,i}^{(n)} $ is equivalent to solving the lifting problem 
 \begin{equation} \label{support2}
 \begin{tikzcd}
 \Sigma \tsph^1 \arrow[d, hook] \arrow[rr, "\iota_1 \circ \hat{f}_{p,i} "] && \mr{B}^2\GL(\hat{\sph}_p) \\
 \CP^n \arrow[urr, dashed, "\tilde{f}_{p,i}^{(n)}"']
 \end{tikzcd}
 \end{equation}
in the homotopy category of $\TTop_*$.  If $\tilde{f}_{p,i}^{(n)}$ exists and \Cref{support2} commutes up to homotopy, then $\mr{CW}$-approximation theorem implies that there exists  $\hat{f}_{p,i}^{(n)} $  so that \Cref{support1} commutes up to homotopy. Choosing the 
 map $\Sigma\tsph^1 \hookrightarrow \CP^n$ to be a cofibration and using homotopy extension property one can make sure that \Cref{support1} commutes on the nose.

 Since, $\GL(\hat{\sph}_p)$ is an infinite loop space, a lift in \Cref{support2} is equivalent to the stable lifting 
 \begin{equation} \label{support3}
 \begin{tikzcd}
 \sph \arrow[d, hook] \arrow[rr, "1 + p^i u "] && \mr{gl}_1(\hat{\sph}_p) \\
 \Sigma^{-2}\CP^n \arrow[urr, dashed, "f_{p,i}^{(n)}"']
 \end{tikzcd}
 \end{equation}
 in the homotopy category of $\Sp$, which can be studied using the Atiyah-Hirzebruch spectral sequence (AHSS)
 \begin{equation} \label{eqn:AHSS}
 \mr{E}_2^{s,t} := \H^s(\Sigma^{-2}\CP^n; \pi_t (\mr{gl}_1(\hat{\sph}_p))) \Rightarrow [ \Sigma^{-2} \CP^n, \mr{gl}_1(\hat{\sph}_p)]_{t-s}
 \end{equation}
 Note that $1 + p$ is the generator of the subgroup $\padic_p \subset  \pi_0(\mr{gl}_1(\hat{\sph}_p))$ and 
 \[p^i \cdot (1+p) = (1+p)^{p^i} = \left\lbrace
 \begin{array}{cccc}
 1+ p^i u & \text{ if $p$ is an odd prime} \\
 1+ 2^{i+1}u & \text{when $p=2$}
 \end{array}
  \right. \] 
 where $u $ is a unit. If $p^i \cdot (1+ p) \in \mr{E}^{0,0}$ survives the AHSS, then we can conclude that $\M{p}{i}$ admits an $\A_n$-algebra structure when $p$ is odd and $\M{2}{i}$ admits an $\A_n$-algebra structure when $p=2$. Since the cohomology of  $\CP^n$ 
 \[ \H^*(\CP^n; \mathbb{Z}) \iso \mathbb{Z}[x_2]/(x_2^{n+1})\] is zero in odd dimensions and $\mathbb{Z}$ in even dimension upto $2n$, the target of the $d_{r}$-differentials 
 \[
 \begin{tikzcd} 
 d_r: \E^{0,0}_r \rar & \E^{r,r-1}_r
 \end{tikzcd} 
 \]
 is zero when $r$ is odd and the finite torsion group 
 \[ \pi_{r-1} (\mr{gl}_1(\hat{\sph}_p)) \cong \pi_{r-1}(\hat{\sph}_p) \] 
 when $r$ is even. This, and and the fact that \Cref{eqn:AHSS} only has finitely many differentials can be used to estimate the value of $i$ for which $p^i \cdot (1+p)$ survives the spectral sequence. We will improve this estimate and conclude the proof \Cref{main:est1} through the following lemma. 
 \begin{lem} \label{lem:refineest} In the $\AHSS$ \Cref{eqn:AHSS}, $p^i \cdot (1+ p) \in \E^{0,0}$ is a permanent cycle if $i > o_p(n)$ where $o_p(n)$ is the function defined in \Cref{main:est1}.   
 \end{lem}
 \begin{proof} Differentials in $\AHSS$ are $\mathbb{Z}$-linear, therefore we have 
 \begin{equation} \label{eqn:1}
  d_{2r}( p \cdot x) = p \cdot d_{2r}(x).
  \end{equation}
  Also an $\AHSS$ $\H^*(X; \pi_*(Y)) \Rightarrow [X, Y]_*$ is natural in the variable $X$. Since multiplication by $p$ map on $\Sigma^{-2}\CP^n$ induces multiplication by  $p^{r+1}$ on $\H^r(\Sigma^{-2}\CP^n; \mathbb{Z})$ we also have 
 \begin{equation} \label{eqn:2}
  d_{2r}( p \cdot x) = p^{r+1} \cdot d_{2r}(x).
  \end{equation}
 Combining \Cref{eqn:1}  and \Cref{eqn:2}, we get 
 \[ 
 d_{2r}(p \cdot x - p^{r+1} \cdot x) = p(1-p^r) \cdot d_{2r}(x) = 0 
 \]
  Since, $1 - p^r$ is a unit in $\padic_p$, $d_{2r}(p \cdot x) = p \cdot d_{2r}(x) = 0$. Thus, if $x$ supports a $d_{2r}$-differential then $p \cdot x$ is a $d_{2r}$-cycle. The result follows from a simple inductive argument.  
 \end{proof}
 
 One could hope to improve \Cref{main:est1}, if the map $1 + p^i u$ factors through an infinite-loop space map 
 \begin{equation} \label{eqn:J}
  \mathcal{J}:\widetilde{\mr{im(J)}} \to \GL(\hat{\sph}_p) 
  \end{equation}
 where $\widetilde{\mr{im(J)}}$ is the infinite loop space associated to a hypothetical connective `image of $\mr{J}$'  spectrum whose zeroth homotopy group is $\padic_p^{\times}$. The zeroth homotopy group of the classical image of $\mr{J}$ spectrum is $\mathbb{Z}/2 \cong \pi_0(\sph)^{\times}$ and it is known that the associated infinite-loop space splits of the unit component of the integral sphere after completing at an odd prime. Whether we can construct this hypothetical `image of $\mr{J}$' spectrum and whether there exists an infinite loop space map 
 $\mathcal{J}$ (as in \Cref{eqn:J}) is the subject of study in the joint work \cite{BK} with N.Kitchloo. We expect the answers to be affirmative, and hence, we end this paper with the following conjecture. 
 \begin{conj} \label{conj:J} For an odd prime $p$, the obstructions to $\Ainfty$-structure on $\M{p}{i}$ lies in the image of $\mr{J}$-part (the chromatic layer $1$) of the homotopy groups of $\M{p}{i}$. 
 \end{conj}

\bibliographystyle{amsalpha}
\bibliography{HAMSv2}
\end{document}